\documentclass[11pt]{article}
\usepackage[margin=1.02in]{geometry}
\usepackage{amsmath,amssymb,amsthm,mathtools}
\usepackage{microtype}
\usepackage{xcolor}
\definecolor{mygreen}{RGB}{0,100,55}
\definecolor{myred}{RGB}{145,25,25}
\usepackage[colorlinks=true,linkcolor=myred,citecolor=mygreen,urlcolor=mygreen]{hyperref}
\usepackage{enumitem}
\usepackage{booktabs}
\usepackage{tikz}
\usetikzlibrary{arrows.meta,positioning,calc}

\newtheorem{theorem}{Theorem}[section]
\newtheorem{proposition}[theorem]{Proposition}
\newtheorem{lemma}[theorem]{Lemma}
\newtheorem{corollary}[theorem]{Corollary}

\theoremstyle{definition}\newtheorem{definition}[theorem]{Definition}
\theoremstyle{remark}\newtheorem{remark}[theorem]{Remark}\newtheorem{example}[theorem]{Example}
\newcommand{\Z}{\mathbb Z}
\newcommand{\Int}{\operatorname{int}}
\newcommand{\nn}{\langle\!\langle}
\newcommand{\NN}{\rangle\!\rangle}

\title{\textbf{Nonabelian Torus Exteriors in $\mathbb S^4$ and Torus Replacement in Four-Manifolds}}
\author{Anar Akhmedov \and Azer Akhmedov}
\date{}

\begin{document}
\maketitle

\begin{abstract}
We study torus-exterior replacement in four-manifolds, with the construction controlled by the peripheral homomorphism. Using torus exteriors arising from the work of Boyle and Kanenobu-Kazama, with explicitly controlled fundamental groups and peripheral systems, we construct smooth homotopy \(4\)-spheres and manifolds homeomorphic to \(\mathbb S^2\times\mathbb S^2\) and \(\#_3(\mathbb S^2\times\mathbb S^2)\). We also study direct gluings of two torus exteriors. A swap gluing of two one-null exteriors kills both meridians and gives a simply connected manifold with Euler characteristic \(4\) and signature \(0\), while a meridian-preserving gluing of doubly-null exteriors retains the complement groups as an amalgam over the meridian subgroup. Twisted peripheral gluings give cyclic and explicit finite nonabelian fundamental groups. We also consider the full Kanenobu-Kazama \(1\)-handle family and Litherland tori with full-rank peripheral subgroup; the latter have no null primitive boundary slope and lead naturally to amalgamated-product fundamental groups.
\end{abstract}

\section{Introduction}

Let $T\subset \mathbb S^4$ be a smoothly embedded oriented torus and let
\[
 C_T=\mathbb S^4\setminus\operatorname{int}\nu T.
\]
In \cite{AkhPre}, the first author used torus exteriors with $\pi_1(C_T)\cong\mathbb Z$ as replacement pieces in knot-surgery four-manifolds. The fundamental-group calculation, however, does not require the exterior group to be cyclic. What it requires is control of the boundary homomorphism
\[
 \pi_1(\partial C_T)\cong\mathbb Z^3\longrightarrow\pi_1(C_T).
\]
If its image is generated by the geometric meridian, then the kernel is a primitive rank-two summand. Hence there is a boundary basis $(\mu,\alpha,\beta)$ with $\alpha$ and $\beta$ null-homotopic in $C_T$. Once a gluing kills the class identified with $\mu$, normal meridional generation kills the whole, possibly nonabelian, group $\pi_1(C_T)$.

The point of this paper is that this peripheral condition occurs for explicit nonabelian torus complements.  The ingredients have distinct origins.  Boyle classified $1$-handles attached to knotted surfaces in terms of peripheral double cosets and derived the corresponding handle relations \cite{BoyleHandles}.  Kanenobu and Kazama later specialized this framework to $1$-handles on the $6$-twist-spun trefoil and computed both the resulting complement groups and their peripheral subgroups \cite{KK}.  We isolate from their family a subfamily $F_k\subset \mathbb S^4$, $k\ge2$, for which the peripheral image is exactly $\langle\mu_k\rangle\cong\mathbb Z$, whereas the commutator subgroup of the complement group has order $k^3$.  These are the examples used in the present replacement theorem.

Peripheral subgroups of knotted tori had been studied earlier, and the earlier examples should be distinguished from the family used here.  Asano constructed examples for which the nonmeridional part of the peripheral subgroup has rank two \cite{Asano}.  Litherland developed the relation between peripheral data and the second homology of the surface-knot group, and also obtained examples with rank-two nonmeridional peripheral part \cite{Litherland}.  Boyle, in a different construction using attached surface $1$-handles, produced tori whose nonmeridional peripheral part is finite cyclic of orders $2$, $5$, and $10$ \cite{BoyleHandles}; this distinction is summarized explicitly in the introduction of Kanenobu-Kazama \cite{KK}.  These are related peripheral-subgroup phenomena, but they are not the particular family used in our replacement theorem.  Our input is the later Kanenobu-Kazama $G(k,0,m)$ family, for which suitable parameter choices make the nonmeridional peripheral part trivial while the complement group remains nonabelian.

We first prove two elementary absorption lemmas: a two-null version and a one-null version.  We then record the Kanenobu-Kazama presentations and peripheral types.  The small blocks $Y_K$, its rim tori, and the identity-double geometry are taken from the first author's earlier preprint \cite{AkhPre}.  The new input here is to replace the cyclic torus exteriors used there by explicitly marked nonabelian exteriors and to carry the peripheral maps through the fundamental-group and intersection-form calculations.  The full $1$-handle family is used in the $Y_K$ block with the one-null gluing; the trivial-type subfamily is kept separate and is also used in the identity-double construction. The resulting closed manifolds are simply connected with intersection forms $H$ and $3H$, hence are homeomorphic to $\mathbb S^2\times\mathbb S^2$ and $\#_3(\mathbb S^2\times\mathbb S^2)$, respectively. We keep the group calculation in pushout form throughout; in particular, the nonabelian exterior groups are not discarded until their meridians have been proved trivial.

We also glue two torus exteriors directly.  A swap gluing of two one-null
exteriors identifies each meridian with a null slope on the opposite side
and gives a simply connected manifold with Euler characteristic $4$ and
signature $0$.  For doubly-null exteriors, a meridian-preserving gluing
retains the two complement groups as an amalgam over the meridian subgroup.
These constructions apply to the full Kanenobu-Kazama $1$-handle family.
Twisted peripheral gluings give cyclic groups and explicit finite
nonabelian groups.  At the other end of the peripheral range, Litherland
tori have full-rank peripheral subgroup and no null primitive boundary
slope; their replacement groups are therefore kept as amalgamated-product
presentations.

\section{Normally marked torus exteriors}\label{sec:general}

Let $T\subset \mathbb S^4$ be an oriented smoothly embedded torus and set
\[
 C_T=\mathbb S^4\setminus \Int\nu T,\qquad \partial C_T\cong \mathbb T^3.
\]
Alexander duality gives
\[
 H_1(C_T;\Z)\cong\Z,\qquad H_2(C_T;\Z)\cong\Z^2,
\]
and therefore $e(C_T)=2$.  Novikov additivity in
\(\mathbb S^4=C_T\cup(\mathbb T^2\times\mathbb D^2)\) gives $\sigma(C_T)=0$.

\begin{lemma}\label{lem:normal-meridian}
For every smoothly embedded oriented torus $T\subset \mathbb S^4$, a meridian $\mu_T$ normally generates $\pi_1(C_T)$.
\end{lemma}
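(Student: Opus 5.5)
The plan is to use van Kampen for the decomposition \(\mathbb S^4=C_T\cup_{\partial}(T\times\mathbb D^2)\) and then the simple connectivity of \(\mathbb S^4\). Write \(N=\nu T\cong T\times\mathbb D^2\), and choose a basepoint on \(\partial C_T=T\times\mathbb S^1\). Then
\[
\pi_1(\partial C_T)\cong\pi_1(T)\times\langle\mu_T\rangle\cong\Z^3 ,
\]
where \(\mu_T=\{pt\}\times\partial\mathbb D^2\) is the meridian. The inclusion \(\partial N\to N\) induces the projection \(\pi_1(T)\times\Z\to\pi_1(T)\), so its kernel is exactly \(\langle\mu_T\rangle\). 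Both pieces are path connected, and after a small collar thickening the boundary torus \(\mathbb T^3\) is connected, so van Kampen applies.

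Van Kampen gives
\[
1=\pi_1(\mathbb S^4)\cong \pi_1(C_T)\ast_{\pi_1(\mathbb T^3)}\pi_1(T).
\]
Since \(\pi_1(\partial N)\to\pi_1(N)\) is surjective with kernel \(\langle\mu_T\rangle\), the amalgam collapses to a quotient:
\[
\pi_1(C_T)\ast_{\Z^3}\pi_1(T)\cong \pi_1(C_T)\big/\nn \iota_*\mu_T\NN .
\]
The step that needs care is justifying this collapse. Every generator coming from \(\pi_1(T)\) is identified with the image of a boundary loop in \(\pi_1(C_T)\), so the amalgam is generated by \(\pi_1(C_T)\). The additional relations imposed are exactly \(\iota_*(\ker(\pi_1\partial N\to\pi_1 N))=1\), which says \(\iota_*\mu_T=1\). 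This can also be read off directly from presentations: take \(\langle \pi_1(C_T)\mid\ \rangle\) together with \(\pi_1(T)=\langle a,b\mid [a,b]\rangle\), identify \(a,b\) with the images of the boundary curves, and set \(\mu_T=1\). The relation \([a,b]=1\) already holds in \(\pi_1(C_T)\) because \(\partial C_T\) is a torus, so it is redundant.

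Hence \(\pi_1(C_T)/\nn\mu_T\NN\) is trivial, which says precisely that \(\mu_T\) normally generates \(\pi_1(C_T)\). Any two meridians are conjugate, since they differ by a path in \(\partial C_T\) and \(C_T\) is connected, so the conclusion holds for every choice of meridian.

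The only real obstacle is bookkeeping of basepoints and the kernel identification. Nothing specific to the torus is used beyond the fact that \(\nu T\) is a \(\mathbb D^2\)-bundle whose fiber circle is the kernel. The argument therefore works for any closed surface in \(\mathbb S^4\), and for orientable \(T\) the bundle is trivial, as used above.
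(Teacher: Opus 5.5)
Your proposal is correct and follows essentially the paper's argument: van Kampen for $\mathbb S^4=C_T\cup(T\times\mathbb D^2)$, with the neighborhood contributing no new generators and only the relation $\mu_T=1$, so that $\pi_1(C_T)/\nn\mu_T\NN\cong\pi_1(\mathbb S^4)=1$. Your extra remarks, that any two meridians are conjugate and that $[a,b]=1$ is redundant, are harmless additions the paper does not spell out.
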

\begin{proof}
Write \(G=\pi_1(C_T)\).  Reattaching the tubular neighborhood
\[
\nu T\cong\mathbb T^2\times\mathbb D^2
\]
by the original boundary identification recovers \(\mathbb S^4\).  Choose
the usual boundary basis \((a,b,\mu_T)\), where \(a,b\) are the two
surface directions and \(\mu_T\) is the boundary of a normal disk.  Under
the inclusion into \(\mathbb T^2\times\mathbb D^2\), the classes \(a,b\)
map to the two generators of
\(\pi_1(\mathbb T^2\times\mathbb D^2)\cong\mathbb Z^2\), while
\(\mu_T\) maps to the identity.

Seifert-van Kampen therefore gives the pushout
\[
G *_{\pi_1(\mathbb T^3)}
\pi_1(\mathbb T^2\times\mathbb D^2).
\]
The two generators coming from the tubular neighborhood are already
identified with the images of \(a,b\) in \(G\), so they do not contribute
new generators to the pushout.  The only new relation imposed on \(G\) is
that the meridian becomes trivial.  Thus
\[
\pi_1(\mathbb S^4)
\cong G/\nn\mu_T\NN.
\]
Since \(\pi_1(\mathbb S^4)=1\), the normal closure of \(\mu_T\) is all of
\(G\).  Hence \(\mu_T\) normally generates \(\pi_1(C_T)\).
\end{proof}

\begin{definition}\label{def:DN}
A \emph{doubly-null marked torus exterior} is a quadruple
\[
 (C_T;\mu,\alpha,\beta)
\]
for which $(\mu,\alpha,\beta)$ is a primitive basis of $H_1(\partial C_T;\Z)$, $\mu$ is a meridian of $T$, and
\[
 \alpha=\beta=1\quad\text{in }\pi_1(C_T).
\]
No assumption is made that $\pi_1(C_T)$ is abelian or cyclic.
\end{definition}

\begin{lemma}\label{lem:cyclic-peripheral-kernel}
Suppose that the boundary inclusion
\[
 i_*:\pi_1(\partial C_T)\cong\mathbb Z^3\longrightarrow \pi_1(C_T)
\]
has image exactly the infinite cyclic subgroup generated by the geometric meridian $\mu$.  Then there are primitive classes $\alpha,\beta\in\pi_1(\partial C_T)$ such that $(\mu,\alpha,\beta)$ is a basis of $\mathbb Z^3$ and
\[
 i_*(\alpha)=i_*(\beta)=1.
\]
In particular, $\alpha$ and $\beta$ are represented by boundary loops that are null-homotopic in $C_T$.
\end{lemma}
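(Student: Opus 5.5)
The plan is to reduce the statement to linear algebra over $\mathbb Z$, using that $\pi_1(\partial C_T)\cong\mathbb Z^3$ is abelian and that the image is infinite cyclic. By hypothesis, $i_*$ maps $\mathbb Z^3$ onto $\langle\mu\rangle\cong\mathbb Z$, where $\mu$ also denotes the boundary class of the geometric meridian. Fix the isomorphism $\langle i_*(\mu)\rangle\cong\mathbb Z$ sending $i_*(\mu)\mapsto 1$. Composing gives a surjective homomorphism
\[
 \phi:\mathbb Z^3\longrightarrow\mathbb Z,\qquad \phi(\mu)=1,
\]
and $\ker\phi=\ker i_*$, because the identification of the image with $\mathbb Z$ is injective.

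The key step is the splitting. Since $\mathbb Z$ is free, the short exact sequence $0\to\ker\phi\to\mathbb Z^3\xrightarrow{\phi}\mathbb Z\to0$ splits, and $\mu$ itself gives a section because $\phi(\mu)=1$. Concretely, every $x\in\mathbb Z^3$ decomposes as
\[
 x=\phi(x)\mu+\bigl(x-\phi(x)\mu\bigr),
\]
so $\mathbb Z^3=\langle\mu\rangle\oplus\ker\phi$. Then $\ker\phi$ is a subgroup of a free abelian group, hence free abelian, and its rank is $3-1=2$. Choosing a basis $\alpha,\beta$ of $\ker\phi$ makes $(\mu,\alpha,\beta)$ a basis of $\mathbb Z^3$. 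Each of $\alpha,\beta$ is part of a basis and is therefore primitive. By construction $i_*(\alpha)=i_*(\beta)=1$.

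For the final sentence, I will represent $\alpha$ and $\beta$ by loops in $\partial C_T\cong\mathbb T^3$. One can take embedded curves, since primitive classes in a torus are represented by simple closed curves, for example linear curves in $\mathbb R^3/\mathbb Z^3$. Their images in $\pi_1(C_T)$ are trivial, so these loops are null-homotopic in $C_T$. Basepoint issues are harmless: $\pi_1(\partial C_T)$ is abelian, so the image subgroup is well defined up to conjugacy, and triviality of an element is conjugation-invariant.

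The only point needing care is to use that $\mu$ maps to a \emph{generator} of the image, not merely that the image is cyclic. This is exactly what makes $\mu$ a section and keeps the index-one condition. If $\mu$ mapped only to a multiple of a generator, $\langle\mu\rangle\oplus\ker$ would have finite index greater than one and the argument would fail. Since the hypothesis says the image is generated by $\mu$, there is no real obstacle.
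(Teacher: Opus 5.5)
Your proposal is correct and follows essentially the same route as the paper: identify the image with $\mathbb Z$ via $i_*(\mu)\mapsto 1$, use $\mu$ as a section to split $\mathbb Z^3=\mathbb Z\langle\mu\rangle\oplus\ker$, and take $\alpha,\beta$ to be a basis of the rank-two kernel. Your remarks on embedded representatives, basepoints, and the necessity that $\mu$ map to a generator are correct but go beyond what the paper's proof spells out.
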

\begin{proof}
Identify \(\operatorname{im}i_*\) with \(\mathbb Z\) by sending
\(i_*(\mu)\) to \(1\).  We obtain a surjective homomorphism
\[
q:\mathbb Z^3\longrightarrow\mathbb Z
\]
with \(q(\mu)=1\).  The map
\[
s:\mathbb Z\longrightarrow\mathbb Z^3,\qquad s(1)=\mu,
\]
is a section of \(q\).  Hence
\[
\mathbb Z^3=\mathbb Z\langle\mu\rangle\oplus\ker q.
\]
In particular, \(\ker q\) is free of rank two and is primitive: the
quotient \(\mathbb Z^3/\ker q\cong\mathbb Z\) is torsion free.

Choose an integral basis \(\alpha,\beta\) of \(\ker q\).  The direct-sum
decomposition shows that
\[
(\mu,\alpha,\beta)
\]
is a basis of \(\mathbb Z^3\).  Since \(\alpha,\beta\in\ker q\), their
images under \(i_*\) are the identity element of
\(\operatorname{im}i_*\), and hence of \(\pi_1(C_T)\).  Thus the
corresponding boundary loops are null-homotopic in \(C_T\).
\end{proof}

\begin{lemma}\label{lem:T3gluing}
Let $(u_1,u_2,u_3)$ and $(v_1,v_2,v_3)$ be primitive bases of $H_1(\mathbb T^3;\mathbb Z)$.  After replacing one $v_i$ by its inverse if necessary, there is an orientation-reversing diffeomorphism of $\mathbb T^3$ carrying $u_i$ to $v_i$ for $i=1,2,3$.
\end{lemma}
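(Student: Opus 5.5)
Write $\mathbb T^3=\mathbb R^3/\mathbb Z^3$ and identify $H_1(\mathbb T^3;\mathbb Z)=\pi_1(\mathbb T^3)=\mathbb Z^3$. The plan is to realize a suitable matrix in $GL_3(\mathbb Z)$ by a linear diffeomorphism and then fix the orientation by inverting one basis vector.

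First, consider the $3\times3$ integer matrices $U$ and $V$ whose columns are the $u_i$ and the $v_i$. Each is a basis of $\mathbb Z^3$ (this is what primitive basis means), so $U,V\in GL_3(\mathbb Z)$ and $\det U,\det V\in\{\pm1\}$. The unique linear automorphism of $\mathbb Z^3$ sending $u_i\mapsto v_i$ is
\[
A=VU^{-1}\in GL_3(\mathbb Z).
\]

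Second, any $A\in GL_3(\mathbb Z)$ preserves the lattice $\mathbb Z^3\subset\mathbb R^3$. Therefore the linear map $x\mapsto Ax$ of $\mathbb R^3$ descends to a diffeomorphism $f_A$ of $\mathbb T^3$, whose inverse is $f_{A^{-1}}$. Since $\mathbb R^3$ is the universal cover, straight-line loops are carried to straight-line loops, so $(f_A)_*=A$ on $\pi_1=H_1$. The map $f_A$ preserves orientation exactly when $\det A=+1$, because its derivative at every point is $A$.

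Third, we adjust the sign. If $\det A=-1$, then $f_A$ is already orientation-reversing and we are done. If $\det A=+1$, replace $v_3$ by $-v_3$; this is the inverse class in $\pi_1$, and the new triple is still a primitive basis. The new matrix is $A'=V D U^{-1}$ with $D=\operatorname{diag}(1,1,-1)$, so $\det A'=-1$, and $f_{A'}$ is an orientation-reversing diffeomorphism carrying $u_1,u_2,u_3$ to $v_1,v_2,-v_3$.

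There is no real obstacle here. The only point to state carefully is that the homology classes are realized as based loops: because $\pi_1(\mathbb T^3)$ is abelian, free and based homotopy classes coincide, so "carrying $u_i$ to $v_i$" is unambiguous up to isotopy. One should also note that the replacement of $v_i$ by its inverse is genuinely needed in general, since the orientation character of a diffeomorphism is determined by $\det A$.
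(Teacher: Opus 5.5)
Your proposal is correct and follows essentially the same route as the paper: you realize the change-of-basis matrix $A\in\mathrm{GL}(3,\mathbb Z)$ by the induced linear diffeomorphism of $\mathbb R^3/\mathbb Z^3$, read off the orientation from $\det A$, and when $\det A=+1$ replace $v_3$ by its inverse to make the determinant $-1$. Your explicit formula $A=VU^{-1}$ and your remark that the sign change is genuinely necessary (any diffeomorphism inducing $A$ on $H_1$ has degree $\det A$) are accurate additions but do not change the argument.
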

\begin{proof}
Use the two primitive bases to identify
\[
H_1(\mathbb T^3;\mathbb Z)\cong\mathbb Z^3.
\]
The correspondence \(u_i\mapsto v_i\) is then represented by a matrix
\(A\in\mathrm{GL}(3,\mathbb Z)\).  Regard \(A\) as an integral linear map
of \(\mathbb R^3\).  Since \(A(\mathbb Z^3)=\mathbb Z^3\), it descends to
a linear diffeomorphism
\[
f_A:\mathbb R^3/\mathbb Z^3\longrightarrow
\mathbb R^3/\mathbb Z^3
\]
whose action on \(H_1\) is exactly \(A\).

The orientation of this diffeomorphism is the sign of \(\det A\).  For
gluing oriented four-manifolds along their boundaries we need an
orientation-reversing boundary map.  If \(\det A=-1\), \(f_A\) already
has the required orientation.  If \(\det A=+1\), replace one target basis
element, say \(v_3\), by \(v_3^{-1}\).  This composes the prescribed
homology map with a diagonal matrix of determinant \(-1\), and hence
changes the determinant to \(-1\).  Reversing the orientation of a
boundary loop does not change whether that loop is null-homotopic, nor
does it change the cyclic subgroup or normal closure generated by it.
Thus the modified basis correspondence has all the required properties
and is realized by an orientation-reversing diffeomorphism of
\(\mathbb T^3\).
\end{proof}

\begin{lemma}\label{lem:replacement}
Let $M$ contain pairwise disjoint square-zero tori $R_1,\dots,R_r$, put
\[
 W=M\setminus \Int(\nu R_1\cup\cdots\cup\nu R_r),
\]
and choose on $\partial\nu R_i$ a primitive basis $(\lambda_i,\eta_i,\rho_i)$, where $\rho_i$ is the meridian of $R_i$.  Let $(C_i;\mu_i,\alpha_i,\beta_i)$ be doubly-null marked torus exteriors and glue by
\[
 \lambda_i\mapsto\alpha_i,\qquad
 \rho_i\mapsto\beta_i,\qquad
 \eta_i\mapsto\mu_i.
\]
If
\[
 \pi_1(M)/\nn\lambda_1,\dots,\lambda_r\NN=1,
\]
then the resulting closed manifold is simply connected.
\end{lemma}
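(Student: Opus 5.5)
Let $X=W\cup_\phi(C_1\sqcup\dots\sqcup C_r)$ denote the glued manifold. By Lemma~\ref{lem:T3gluing} the basis correspondences are realized by orientation-reversing diffeomorphisms, so $X$ is a closed oriented manifold. The plan is to compute $\pi_1(X)$ by iterated Seifert--van Kampen and kill it in two stages. Along the way we compare with the closed manifold $M=W\cup\bigcup_i\nu R_i$.

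The first step is a presentation of $\pi_1(W)$ relative to $\pi_1(M)$. Regluing $\nu R_i\cong\mathbb T^2\times\mathbb D^2$, exactly as in the proof of Lemma~\ref{lem:normal-meridian}, adds no generators, because $\lambda_i,\eta_i$ already lie in $\pi_1(W)$. It only adds the relations $\rho_i=1$. Hence
\[
\pi_1(M)\cong\pi_1(W)/\nn\rho_1,\dots,\rho_r\NN .
\]

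The second step computes $\pi_1(X)$ as an iterated pushout. Choose basepaths from a basepoint in $W$ to each boundary torus. Then $\pi_1(X)$ is the quotient of $\pi_1(W)*\pi_1(C_1)*\dots*\pi_1(C_r)$ by the relations identifying $\lambda_i=\alpha_i$, $\eta_i=\mu_i$ and $\rho_i=\beta_i$ for each $i$. The tori are connected, so no extra HNN generators appear; $W$ is connected since the $R_i$ have codimension two. In $\pi_1(C_i)$ we have $\alpha_i=\beta_i=1$ by Definition~\ref{def:DN}. So the images of $\lambda_i$ and $\rho_i$ become trivial in $\pi_1(X)$.

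The third step kills everything. Every element of $\pi_1(X)$ is a product of images of $\pi_1(W)$ and the $\pi_1(C_i)$. Since $\rho_i=1$, the image of $\pi_1(W)$ factors through $\pi_1(W)/\nn\rho_i\NN\cong\pi_1(M)$. Since also $\lambda_i=1$, it factors further through $\pi_1(M)/\nn\lambda_1,\dots,\lambda_r\NN=1$. So the image of $\pi_1(W)$ is trivial. In particular $\eta_i\in\pi_1(W)$ maps to $1$, hence $\mu_i=1$ in $\pi_1(X)$. By Lemma~\ref{lem:normal-meridian}, $\mu_i$ normally generates $\pi_1(C_i)$, so the image of each $\pi_1(C_i)$ is trivial as well. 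Since $\pi_1(X)$ is generated by these images, it is trivial.

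The main obstacle is bookkeeping rather than ideas. One point is basepoint and conjugacy: the relations hold only up to the chosen basepaths, but normal closures are insensitive to this. Another is that replacing a basis element by its inverse in Lemma~\ref{lem:T3gluing} does not affect nullity or normal closures. The remaining care is to justify $\pi_1(M)\cong\pi_1(W)/\nn\rho_i\NN$ with the correct identification of $\lambda_i$ in both groups.
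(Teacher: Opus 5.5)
Your proposal is correct and follows the paper's proof essentially step for step. Both arguments run the same way: van Kampen, then the doubly-null relations force $\lambda_i=\rho_i=1$, the image of $\pi_1(W)$ factors through $\pi_1(M)/\nn\lambda_1,\dots,\lambda_r\NN=1$, hence $\eta_i=\mu_i=1$, and Lemma~\ref{lem:normal-meridian} kills each $\pi_1(C_i)$. Your extra remarks on basepaths, the orientation adjustment from Lemma~\ref{lem:T3gluing}, and the justification of $\pi_1(M)\cong\pi_1(W)/\nn\rho_1,\dots,\rho_r\NN$ are bookkeeping that the paper leaves implicit.
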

\begin{proof}
Write
\[
G_W=\pi_1(W),\qquad G_i=\pi_1(C_i).
\]
For each boundary component, van Kampen identifies the three boundary
classes in \(G_W\) with their prescribed images in \(G_i\).  Thus the
fundamental group of the glued manifold is obtained from
\[
G_W*G_1*\cdots *G_r
\]
by imposing
\[
\lambda_i=\alpha_i,\qquad
\rho_i=\beta_i,\qquad
\eta_i=\mu_i
\qquad (1\leq i\leq r).
\]
Because the marking on \(C_i\) is doubly null,
\[
\alpha_i=\beta_i=1\quad\text{in }G_i.
\]
Consequently the first two boundary relations become
\[
\lambda_i=1,\qquad \rho_i=1,
\]
while the third relation remains
\[
\eta_i=\mu_i.
\]

It is useful to compare this quotient with the original filling of \(W\).
Restoring \(\nu R_i\cong\mathbb T^2\times\mathbb D^2\) kills the meridian
\(\rho_i\) and recovers \(M\).  Hence
\[
G_W/\nn\rho_1,\dots,\rho_r\NN\cong\pi_1(M).
\]
After the replacement gluing we have already imposed all the relations
\(\rho_i=1\), so the image of \(G_W\) first factors through
\(\pi_1(M)\).  We also impose \(\lambda_i=1\).  Therefore the image of
\(G_W\) in the final group factors through
\[
\pi_1(M)/\nn\lambda_1,\dots,\lambda_r\NN.
\]
By hypothesis this quotient is trivial.  Thus every element coming from
\(G_W\), and in particular each \(\eta_i\), is trivial in the final
group.

The remaining boundary relation \(\eta_i=\mu_i\) now gives
\[
\mu_i=1
\]
for every inserted exterior.  By Lemma~\ref{lem:normal-meridian}, the
geometric meridian \(\mu_i\) normally generates \(G_i\).  Hence
\[
G_i/\nn\mu_i\NN=1,
\]
so every exterior group \(G_i\) also dies.  All factors in the van Kampen
pushout are therefore trivial, and the glued manifold is simply
connected.
\end{proof}

\begin{lemma}\label{lem:one-null}
Let $M$ contain pairwise disjoint square-zero tori $R_1,\dots,R_r$ and put
\[
W=M\setminus\operatorname{Int}(\nu R_1\cup\cdots\cup\nu R_r).
\]
On $\partial\nu R_i$ choose a primitive basis $(\lambda_i,\eta_i,\rho_i)$, with $\rho_i$ the meridian.  Let $C_i$ be torus exteriors with a primitive boundary basis $(\mu_i,u_i,v_i)$ such that $\mu_i$ is the geometric meridian and
\[
v_i=1\quad\text{in }\pi_1(C_i).
\]
Glue by
\[
\lambda_i\longmapsto u_i,\qquad
\eta_i\longmapsto\mu_i,\qquad
\rho_i\longmapsto v_i.
\]
Assume that, after the original fillings $W\to M$, the classes $\eta_i$ are trivial in $\pi_1(M)$ and that
\[
\pi_1(M)/\nn\lambda_1,\dots,\lambda_r\NN=1.
\]
Then the resulting closed manifold is simply connected.
\end{lemma}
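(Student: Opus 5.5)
The argument follows the proof of Lemma~\ref{lem:replacement}, with one change: the gluing now only forces \(\rho_i=1\), and \(\eta_i=1\) is taken from the hypothesis. Write \(G_W=\pi_1(W)\) and \(G_i=\pi_1(C_i)\). By van Kampen, the fundamental group of the glued manifold is the quotient of
\[
G_W*G_1*\cdots*G_r
\]
by the relations
\[
\lambda_i=u_i,\qquad \eta_i=\mu_i,\qquad \rho_i=v_i\qquad(1\le i\le r).
\]
Since \(v_i=1\) in \(G_i\), the third relation becomes \(\rho_i=1\). As in the earlier proof, \(G_W/\nn\rho_1,\dots,\rho_r\NN\cong\pi_1(M)\), because refilling \(\nu R_i\) kills exactly the meridians. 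So the image of \(G_W\) in the final group \(\Gamma\) factors through \(\pi_1(M)\).

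Next I use the hypothesis that each \(\eta_i\) is trivial in \(\pi_1(M)\). Since the image of \(G_W\) factors through \(\pi_1(M)\), each \(\eta_i\) maps to \(1\) in \(\Gamma\). The relation \(\eta_i=\mu_i\) then gives \(\mu_i=1\) in \(\Gamma\). By Lemma~\ref{lem:normal-meridian}, \(\mu_i\) normally generates \(G_i\), so the whole image of \(G_i\) in \(\Gamma\) is trivial. In particular \(u_i=1\) in \(\Gamma\), and the relation \(\lambda_i=u_i\) forces \(\lambda_i=1\).

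Now every \(\rho_i\) and every \(\lambda_i\) is trivial in \(\Gamma\), so the image of \(G_W\) factors through \(\pi_1(M)/\nn\lambda_1,\dots,\lambda_r\NN=1\). The images of the \(G_i\) are already trivial. Since \(\Gamma\) is generated by the images of the free factors, \(\Gamma=1\).

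The step that needs care is the order of the argument. Unlike the doubly-null case, \(\lambda_i\) is not killed by the gluing itself, because \(u_i\) need not be null in \(C_i\). So the meridians must be killed first, using the extra hypothesis on \(\eta_i\), and only then can \(\lambda_i\) be killed through \(u_i\). I would also note that the gluing map is realized by Lemma~\ref{lem:T3gluing}, possibly after inverting one basis element. This does not affect the argument, since inverting a loop changes neither its triviality nor the normal closure it generates.
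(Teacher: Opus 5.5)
Your proposal is correct and follows the paper's proof step for step. The null slope $v_i$ forces $\rho_i=1$, so the image of $\pi_1(W)$ factors through $\pi_1(M)$; the hypothesis on $\eta_i$ then kills $\mu_i$ and hence all of $\pi_1(C_i)$, which gives $u_i=1$ and so $\lambda_i=1$; finally, $\pi_1(M)/\langle\!\langle\lambda_1,\dots,\lambda_r\rangle\!\rangle=1$ kills the rest, and your remarks on the ordering and on the orientation adjustment via Lemma~\ref{lem:T3gluing} are accurate additions.
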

\begin{proof}
Write \(G_W=\pi_1(W)\) and \(G_i=\pi_1(C_i)\).  Van Kampen gives a
quotient of
\[
G_W*G_1*\cdots*G_r
\]
by the relations
\[
\lambda_i=u_i,\qquad
\eta_i=\mu_i,\qquad
\rho_i=v_i.
\]
Since \(v_i=1\) in \(G_i\), the last relation gives
\[
\rho_i=1.
\]
Imposing all the meridional relations \(\rho_i=1\) on \(G_W\) is exactly
what occurs when the deleted neighborhoods \(\nu R_i\) are restored.
Therefore the image of \(G_W\) now factors through
\[
G_W/\nn\rho_1,\dots,\rho_r\NN\cong\pi_1(M).
\]

By assumption, the image of each \(\eta_i\) is trivial in this original
filling quotient.  Hence \(\eta_i=1\) in the replacement group as well.
The relation \(\eta_i=\mu_i\) then gives
\[
\mu_i=1.
\]
Lemma~\ref{lem:normal-meridian} says that \(\mu_i\) normally generates
\(G_i\), so the entire inserted exterior group \(G_i\) becomes trivial.
In particular,
\[
u_i=1.
\]
The remaining relation \(\lambda_i=u_i\) therefore gives
\[
\lambda_i=1
\]
for every \(i\).

At this point all inserted exterior groups have disappeared, and the
remaining image of \(G_W\) is a quotient of
\[
\pi_1(M)/\nn\lambda_1,\dots,\lambda_r\NN.
\]
This group is trivial by hypothesis.  Hence no generator from either the
central piece or any inserted exterior survives, and the resulting
manifold is simply connected.
\end{proof}

\section{Explicit nonabelian admissible exteriors from Boyle $1$-handles}\label{sec:KK}

We need torus exteriors for which both the nonabelian group and the peripheral map are controlled.  Boyle's work provides the classification of attached surface $1$-handles and the associated handle relations \cite{BoyleHandles}.  Kanenobu and Kazama use that framework for the $6$-twist-spun trefoil and perform the explicit complement-group and peripheral-subgroup calculations needed here \cite{KK}.

\subsection{Why these are genuine $\mathbb T^3$-boundary replacement pieces}
We first record a basic point about the ambient embedding.  The Kanenobu-Kazama objects used below are not merely complements of $2$-knots.  They begin with a $2$-knot $K\cong \mathbb S^2\subset \mathbb S^4$ and attach a surface $1$-handle $h$.  The resulting connected oriented surface
\[
 F=K+h
\]
has genus one; hence $F\cong \mathbb T^2$ is a smoothly embedded oriented torus in $\mathbb S^4$.

\begin{lemma}\label{lem:KK-square-zero}
Every smoothly embedded oriented torus $F\subset \mathbb S^4$ has self-intersection zero.  Consequently its oriented normal bundle is trivial, so
\[
 F^2=0,\qquad \nu F\cong \mathbb T^2\times\mathbb D^2,
 \qquad \partial\nu F\cong \mathbb T^3.
\]
In particular, every Kanenobu-Kazama torus used below is a square-zero torus and its exterior is a legitimate $\mathbb T^3$-boundary replacement piece.
\end{lemma}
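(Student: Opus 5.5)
The plan is to compute the self-intersection homologically and then use the classification of oriented plane bundles by their Euler class. Since \(H_2(\mathbb S^4;\Z)=0\), the fundamental class \([F]\) vanishes in \(H_2(\mathbb S^4)\). The self-intersection number \(F\cdot F\) is the intersection pairing of \([F]\) with itself, so it depends only on the homology class. Hence \(F\cdot F=0\).

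Equivalently, one can push \(F\) off itself along a section of \(\nu F\) that is transverse to the zero section. The signed count of zeros of that section is the Euler number \(e(\nu F)[F]\), and it equals the homological self-intersection, which is \(0\).

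Next I will identify the Euler number with the self-intersection number. For an embedded closed oriented surface \(F\) in an oriented \(4\)-manifold \(X\), this is the standard identity
\[
\langle e(\nu F),[F]\rangle = [F]\cdot[F].
\]
It follows from the Thom class of \(\nu F\) restricting to the Poincaré dual of \([F]\). Thus \(e(\nu F)=0\) in \(H^2(F;\Z)\cong\Z\). Oriented rank-two real bundles over a closed oriented surface are classified by their Euler class, since \(BSO(2)=\mathbb{CP}^\infty=K(\Z,2)\). Therefore \(\nu F\) is trivial, which gives \(\nu F\cong \mathbb T^2\times\mathbb D^2\) and \(\partial\nu F\cong\mathbb T^2\times\mathbb S^1=\mathbb T^3\).

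Finally, the Kanenobu-Kazama surfaces \(F=K+h\) are smoothly embedded oriented tori, as recorded just before the statement. The general result therefore applies to them. Their exteriors are compact oriented \(4\)-manifolds with boundary \(\mathbb T^3\), carrying the product framing used in Lemmas~\ref{lem:T3gluing}, \ref{lem:replacement}, and \ref{lem:one-null}.

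The only delicate point is the identification of the normal Euler number with the homological self-intersection. This is standard, and I would cite it rather than reprove it. One subtlety should be checked: orientability of \(F\) and of \(\mathbb S^4\) makes \(\nu F\) orientable, so the classification by an integer Euler class applies. No fundamental-group input is needed.
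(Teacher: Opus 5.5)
Your proposal is correct and follows the paper's proof step for step: $[F]=0$ because $H_2(\mathbb S^4;\mathbb Z)=0$, so $F^2=0$; the self-intersection equals the normal Euler number; and oriented plane bundles over $\mathbb T^2$ are classified by their Euler class, so $\nu F\cong\mathbb T^2\times\mathbb D^2$ with boundary $\mathbb T^3$. Your extra remarks, the Thom-class justification of $\langle e(\nu F),[F]\rangle=[F]\cdot[F]$ and the identification $BSO(2)=K(\mathbb Z,2)$, only supply standard facts that the paper uses without proof.
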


\begin{proof}
Since $H_2(\mathbb S^4;\mathbb Z)=0$, the homology class $[F]$ vanishes.  Hence
\[
 F^2=[F]\cdot[F]=0.
\]
For an oriented embedded surface in an oriented $4$-manifold, the self-intersection number is the Euler number of its oriented rank-two normal bundle $\nu F$.  Thus $e(\nu F)=0\in H^2(\mathbb T^2;\mathbb Z)$.  Oriented real rank-two bundles over $\mathbb T^2$ are classified by their Euler class, so $\nu F$ is trivial.  Therefore $\nu F\cong \mathbb T^2\times\mathbb D^2$ and $\partial\nu F\cong \mathbb T^2\times\mathbb S^1=\mathbb T^3$.
\end{proof}

By Lemma~\ref{lem:KK-square-zero}, its tubular neighborhood is
\[
 \nu F\cong \mathbb T^2\times\mathbb D^2,
\]
and its exterior
\[
 C_F=\mathbb S^4\setminus\operatorname{Int}\nu F
\]
has
\[
 \mathord{\partial C_F\cong \mathbb T^2\times\mathbb S^1=\mathbb T^3.}
\]
Equivalently,
\[
 \mathbb S^4=C_F\cup_{\mathbb T^3}(\mathbb T^2\times\mathbb D^2).
\]
Thus $C_F$ has exactly the boundary required to replace a deleted square-zero torus neighborhood $\nu R\cong \mathbb T^2\times\mathbb D^2$ in a closed $4$-manifold.  In particular, all gluings below are genuine diffeomorphisms
\[
 \phi:\partial C_F\longrightarrow\partial\nu R\cong \mathbb T^3,
\]
not gluings along the $\mathbb S^2\times\mathbb S^1$ boundary of the original $2$-knot exterior.

The three boundary directions will be denoted $(\mu,\alpha,\beta)$, with $\mu$ the geometric meridian of the torus $F$.  For the special Kanenobu-Kazama subfamily used below, their peripheral calculation implies that the image of
\[
 \iota_*:\pi_1(\partial C_F)\cong\mathbb Z^3\longrightarrow\pi_1(C_F)
\]
is precisely the cyclic subgroup generated by $\mu$.  By Lemma~\ref{lem:cyclic-peripheral-kernel}, the kernel is a primitive rank-two direct summand.  Choosing a basis $\alpha,\beta$ of this kernel gives
\[
 \alpha=\beta=1\quad\text{in }\pi_1(C_F).
\]
This is the precise $\mathbb T^3$ boundary information used in the replacement gluing.  The fact that $\pi_1(C_F)$ itself is nonabelian is compatible with this: the exterior group is normally generated by the meridian, while the two selected kernel slopes die.  We do not identify these slopes with the two obvious push-offs of a chosen basis of $H_1(F)$ unless that geometric identification has been computed separately.

For a knotted torus $F\subset \mathbb S^4$, write
\[
 C_F=\mathbb S^4\setminus\operatorname{Int}\nu F,
 \qquad G_F=\pi_1(C_F),
\]
and let
\[
 P_F=\operatorname{im}\bigl(\pi_1(\partial C_F)\longrightarrow G_F\bigr)
\]
be its peripheral subgroup.  Kanenobu-Kazama recall that
\[
 P_F\cong \mathbb Z\langle\mu\rangle\oplus\tau_F,
\]
where $\mu$ is a meridian and $\tau_F$ is a quotient of $\pi_1(F)\cong\mathbb Z^2$; they call $\tau_F$ the \emph{type} of $F$ \cite[Section~1]{KK}.  Thus $\tau_F=0$ is exactly the peripheral condition needed here: the image of the entire boundary group is generated by the meridian alone.

Let $K\subset \mathbb S^4$ be the $6$-twist-spun trefoil.  Its group is denoted $\pi$ in \cite{KK}.  For a $1$-handle $h$ attached to $K$, Boyle associates an orbit $T(h)$ in the commutator subgroup of $\pi$.  Kanenobu-Kazama use Boyle's calculation to obtain
\[
 \pi_1(\mathbb S^4\setminus\nu(K+h))\cong \pi/[t,T(h)],
\]
and show that the peripheral subgroup is generated by the meridian $t$ together with $T(h)$ \cite[Proposition~1]{KK}.

\begin{proposition}\label{prop:KKpresentation}
Kanenobu-Kazama record the following presentation for the $6$-twist-spun trefoil group, citing Zeeman's twist-spinning work \cite{Zeeman,KK}:
\[
 \pi=\langle x,y\mid xyx=yxy,\ [x^6,y]=1\rangle,
\]
where $x$ and $y$ are meridians.  We take $t=x$.  If $h$ is a $1$-handle with Boyle orbit $T(h)\subset\pi'$, then
\[
 \pi_1\bigl(\mathbb S^4\setminus\nu(K+h)\bigr)
 \cong
 \pi/\nn [x,u]\; ;\;u\in T(h)\NN .
\]
Equivalently, after choosing words $u_1,\dots,u_s$ in $x,y$ representing the orbit,
\[
 \pi_1\bigl(\mathbb S^4\setminus\nu(K+h)\bigr)
 \cong
 \left\langle x,y\ \middle|\ xyx=yxy,\ [x^6,y]=1,
 [x,u_1]=\cdots=[x,u_s]=1\right\rangle .
\]
For the handles $h_w$ used by Kanenobu-Kazama with nontrivial orbit of length six,
\[
 T(h_w)=\{w,xwx^{-1},\ldots,x^5wx^{-5}\},
\]
so (2.2) becomes
\[
 G_w=\left\langle x,y\ \middle|\ xyx=yxy,\ [x^6,y]=1,
 [x,x^nwx^{-n}]=1\ (0\le n\le5)\right\rangle .
\]
Here $w$ is the specific element of $\pi'$ chosen in the Kanenobu-Kazama normal form.  We will use (2.1)-(2.3), rather than an expanded word in $x,y$, whenever no expanded word is needed; this avoids introducing an unverified change of generators.
\end{proposition}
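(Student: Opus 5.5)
This proposition is essentially a citation: its content is the Zeeman presentation of the twist-spun trefoil group together with the Boyle--Kanenobu--Kazama handle relation. The plan is to verify each displayed group in turn, from the knot exterior to the torus exterior.

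\textbf{Knot group.} For the knot group I would invoke Zeeman's twist-spinning theorem \cite{Zeeman}. The $k$-twist spin of a classical knot $J$ has fibered exterior with fiber the punctured $k$-fold cyclic branched cover of $J$. Its group is therefore obtained from $\pi_1(\mathbb S^3\setminus J)$ by declaring $x^k$ central, where $x$ is the meridian. For the trefoil, $\pi_1=\langle x,y\mid xyx=yxy\rangle$ with $x,y$ meridians. Since $x$ and $y$ are conjugate, centrality of $x^6$ is equivalent to the single relation $[x^6,y]=1$, because $[x^6,x]=1$ holds automatically. This gives the presentation (2.1) with $t=x$.

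\textbf{Torus group.} Next I would derive the effect of attaching the $1$-handle by van Kampen. The exterior of $K+h$ is obtained from the exterior of $K$ by removing a neighborhood of the core arc of $h$, which does not change $\pi_1$ in dimension four. One then adds back the part bounded by the tube, which contributes a $2$-handle-type relation. The new relation says that the meridian at one foot of the tube equals the meridian at the other foot, transported along the core. Writing the second foot's meridian as $u^{-1}xu$ for a word $u$ determined by the core arc, the relation becomes $[x,u]=1$.

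The ambiguity in choosing the core arc and basing path replaces $u$ by elements of the double coset $P\,u\,P$, where $P$ is the peripheral subgroup of $K$. This is exactly Boyle's orbit $T(h)$. Imposing the relation for one representative forces it for the whole orbit modulo the normal closure, so the group is $\pi/\nn[x,u];u\in T(h)\NN$. This is \cite[Proposition~1]{KK}, and I would cite it rather than reprove Boyle's classification \cite{BoyleHandles}.

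\textbf{Finite presentation and the special handles.} For the finitely presented form (2.2), I would note that normal closures are generated by finitely many conjugates once the orbit is finite. For the handles $h_w$ the orbit consists of the six conjugates $x^nwx^{-n}$. Substituting these gives (2.3).

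\textbf{Main obstacle.} The hardest point is the orbit step: checking that the relation $[x,u]$ is well defined on the orbit. This needs the fact that $x^6$ is central and that the peripheral subgroup of $K$ is generated by $x$, so that the $x$-conjugates are all that appear. I would try to establish this directly from Zeeman's fibration: the monodromy has order $6$, so conjugation by $x$ acts with period $6$ on $\pi'$. This explains why the orbit has length six.
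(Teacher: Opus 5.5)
Your proposal is correct and takes the same route as the paper. The paper's proof just cites Boyle's handle relation as packaged in Kanenobu--Kazama's Proposition~1, substitutes $t=x$, and reads off their orbit description for $h_w$; your sketches of the Zeeman and van Kampen (2-handle) steps are consistent motivation for those same citations.

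One correction to your ``main obstacle'': the relation does not depend on which orbit or double-coset representative you pick, and this needs no twist-spinning input. In any group, $[x,x^iux^j]$ is conjugate to $[x,u]$; with $[a,b]=aba^{-1}b^{-1}$ it equals $x^i[x,u]x^{-i}$. Centrality of $x^6$ only shows that the orbit length divides $6$. Length exactly six is a hypothesis on the handles $h_w$, not something the monodromy explains.
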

\begin{proof}
Boyle's handle calculation is the source of the quotient relation; Kanenobu-Kazama package it as their Proposition~1 for the present setting \cite{BoyleHandles,KK}.  Thus
$\pi(K+h)\cong\pi/[t,T(h)]$, where $[t,T(h)]$ denotes the normal closure of all commutators $[t,u]$ with $u\in T(h)$.  Substituting $t=x$ gives (2.1).  Their description of the orbit of $h_w$ gives (2.3).
\end{proof}

Kanenobu-Kazama consider handles $h_w$ indexed by elements $w=a^k b^\ell c^m$ in the commutator subgroup and denote the resulting complement group by $G(k,\ell,m)$.  For the subfamily $\ell=0$, the orbit representative has the form $w=a^k c^m$.  Their Claims~6 and~7 show that, for $k\ne0$,
\[
|G'(k,0,m)|=|k|^3,
\]
and compute the order of the peripheral orbit element: for odd $k>0$ it is $k/\gcd(k,m)$, whereas for even $k=2r>0$ it is $2r/\gcd(2r,m+r)$. These are the only additional group-theoretic facts from \cite{KK} used below.

\begin{theorem}\label{thm:KK-family}
For every integer $k\ge2$ there is a smoothly embedded torus $F_k\subset \mathbb S^4$, obtained by attaching a Boyle $1$-handle to the $6$-twist-spun trefoil, such that
\begin{enumerate}[label=(\roman*)]
\item $P_{F_k}=\mathbb Z\langle\mu_k\rangle$ is generated by a meridian;
\item $G_k:=\pi_1(\mathbb S^4\setminus\nu F_k)$ is nonabelian;
\item the commutator subgroup $G_k'$ has order $k^3$.
\end{enumerate}
Consequently $C_{F_k}$ admits a doubly-null marking, and the groups $G_k$ are pairwise nonisomorphic.
\end{theorem}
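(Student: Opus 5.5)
The plan is to choose, for each $k\ge2$, a handle $h_w$ from the Kanenobu-Kazama family with $\ell=0$, so $w=a^kc^m$. The integer $m$ is picked so that the peripheral orbit element $w$ is trivial in $G(k,0,m)$, and we set $F_k=K+h_w$. By Proposition~\ref{prop:KKpresentation} and \cite[Proposition~1]{KK}, the peripheral subgroup $P_{F_k}$ is generated by the meridian $t=x$ together with the orbit $T(h_w)$. Hence (i) reduces to showing that every conjugate $x^nwx^{-n}$ is trivial in $G_k$. Since the orbit consists of $x$-conjugates of $w$, it suffices that $w=1$. By the recorded order formula (Claims~6,~7 of \cite{KK}), this means:
\[
\text{for odd }k:\ \frac{k}{\gcd(k,m)}=1;\qquad \text{for even }k=2r:\ \frac{2r}{\gcd(2r,m+r)}=1.
\]
The concrete choices are therefore $m=k$ (or $m=0$) when $k$ is odd, and $m=r=k/2$ (so that $m+r=2r$) when $k$ is even. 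With these choices $\tau_{F_k}=0$ and $P_{F_k}=\langle\mu_k\rangle$. This subgroup is infinite cyclic because the abelianization of $G_k$ is $H_1(C_{F_k})\cong\Z$, generated by the image of $\mu_k$.

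Next, (iii) is immediate from $|G'(k,0,m)|=|k|^3=k^3$, which holds for every $m$ since $k\neq0$. For (ii), $k\ge2$ gives $k^3>1$, so $G_k'\ne1$ and $G_k$ is nonabelian. The marking then follows from Lemma~\ref{lem:cyclic-peripheral-kernel}, applied to the boundary inclusion whose image is exactly $\langle\mu_k\rangle\cong\Z$. That lemma yields a primitive basis $(\mu_k,\alpha,\beta)$ with $\alpha=\beta=1$, which is a doubly-null marking in the sense of Definition~\ref{def:DN}. Lemma~\ref{lem:KK-square-zero} guarantees that the boundary is really $\mathbb T^3$.

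For pairwise nonisomorphism, note that the commutator subgroup is characteristic, so an isomorphism $G_k\cong G_{k'}$ restricts to an isomorphism $G_k'\cong G_{k'}'$. This forces $k^3=k'^3$, hence $k=k'$.

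The main obstacle is step (i). We must make sure that the order formula from \cite{KK} really measures the element generating $\tau_F$, namely the image of $T(h)$ in $G(k,0,m)$. We must also check that the parameter values chosen above correspond to admissible handles in their normal form, with $w\in\pi'$ and an orbit of the stated type. If the normal form restricts $m$ to a residue range, the first thing to try is reducing $m$ modulo $k$ (odd case) or modulo $2r$ after the shift by $r$ (even case). Order $1$ depends only on divisibility, so it survives this reduction.
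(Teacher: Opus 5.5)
Your proposal is correct and follows essentially the same route as the paper: the same parameter choices $m=k$ for odd $k$ and $m=k/2$ for even $k$ make the orbit element trivial, Kanenobu--Kazama's Proposition~1 and Claims~6--7 then give (i)--(iii), the split kernel of the peripheral map gives the doubly-null marking, and the orders $k^3$ of the commutator subgroups distinguish the groups. Your added remark that $m=0$ also works for odd $k$ is valid, since $\gcd(k,0)=k$.
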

\begin{proof}
Use the Kanenobu-Kazama family $G(k,0,m)$.  If $k$ is odd, take $m=k$; then the orbit element has order $k/\gcd(k,k)=1$.  If $k=2r$ is even, take $m=r$; then its order is $2r/\gcd(2r,2r)=1$.  Proposition~1 of \cite{KK} therefore gives $\tau_{F_k}=0$, hence $P_{F_k}=\mathbb Z\langle\mu_k\rangle$.  Claims~6 and~7 of \cite{KK} give $|G_k'|=k^3$.  Since $k\ge2$, the commutator subgroup is nontrivial, so $G_k$ is nonabelian.  Since the peripheral image is exactly $\langle\mu_k\rangle$, and $[\mu_k]$ generates
$H_1(C_{F_k};\mathbb Z)\cong\mathbb Z$, the meridian has infinite order.  Hence the boundary map fits into a split exact sequence
\[
0\longrightarrow K_k\longrightarrow \pi_1(\partial C_{F_k})\cong\mathbb Z^3
\longrightarrow \langle\mu_k\rangle\cong\mathbb Z\longrightarrow0,
\]
where the splitting is given by the geometric meridian.  Thus $K_k\cong\mathbb Z^2$ is a primitive direct summand.  Choose a primitive basis $\alpha_k,\beta_k$ of $K_k$.  By definition of the kernel,
\[
\alpha_k=\beta_k=1\quad\text{in }G_k,
\]
and $(\mu_k,\alpha_k,\beta_k)$ is a primitive basis of the boundary group.  Therefore $(C_{F_k};\mu_k,\alpha_k,\beta_k)$ is doubly-null.  Finally, the orders $|G_k'|=k^3$ distinguish the groups.
\end{proof}

\begin{example}[the first member]\label{ex:KK2}
For $k=2$ take $m=1$.  Kanenobu-Kazama note that the commutator subgroup $G'(2,0,m)$ is the quaternion group of order eight.  The chosen orbit has order one, so the associated torus has peripheral subgroup $\mathbb Z\langle\mu\rangle$ but nonabelian complement group.  This is the smallest explicit example in the family above.
\end{example}

\section{The full Kanenobu-Kazama $1$-handle family}\label{sec:fullKK}
We retain the full $1$-handle family $G(k,\ell,m)$ from Section~2 of \cite{KK}, without imposing the type-zero condition.  The later satellite constructions in \cite{KK} are not used here.

Let
\[
w=a^k b^\ell c^m\in\pi'
\]
and let $F(k,\ell,m)=K+h_w$ be the torus obtained from the $6$-twist-spun trefoil $K$ by the corresponding $1$-handle.  Write
\[
G(k,\ell,m)=\pi_1\bigl(\mathbb S^4\setminus\nu F(k,\ell,m)\bigr).
\]
Proposition~1 of \cite{KK} gives
\[
G(k,\ell,m)\cong \pi/[x,T(h_w)],
\]
and says that the peripheral subgroup is generated by the meridian $x$ and the orbit $T(h_w)$.  Consequently the extra peripheral factor is cyclic.  Let $p=p(k,\ell,m)$ denote the order of the orbit element in $G(k,\ell,m)$, with $p=0$ denoting infinite order as in \cite{KK}.  To avoid using $\mathbb Z_0$ for an infinite cyclic group, we write
\[
P_{F(k,\ell,m)}\cong
\begin{cases}
\mathbb Z\langle\mu\rangle\oplus\mathbb Z_p,& p>0,\\
\mathbb Z\langle\mu\rangle\oplus\mathbb Z,& p=0.
\end{cases}
\]

\begin{lemma}\label{lem:cyclic-type-one-null}
For every torus in the Kanenobu-Kazama $1$-handle family there is a primitive basis
\[
(\mu,u,v)
\]
of $\pi_1(\partial C_F)\cong\mathbb Z^3$ such that $\mu$ is the geometric meridian,
\[
v=1\quad\text{in }\pi_1(C_F),
\]
and the image of $u$ generates the cyclic type factor $\tau_F$.  If $p=1$, both $u$ and $v$ may be chosen null-homotopic.
\end{lemma}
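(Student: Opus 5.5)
We work with the surjection $i_*:\pi_1(\partial C_F)\cong\mathbb Z^3\to P_F$ and pick a basis of $\mathbb Z^3$ adapted to it, in the same spirit as Lemma~\ref{lem:cyclic-peripheral-kernel}. By Proposition~1 of \cite{KK}, as recalled above, $P_F\cong\mathbb Z\langle\mu\rangle\oplus\tau_F$, where $\tau_F$ is cyclic: it is $\mathbb Z_p$ when $p>0$ and $\mathbb Z$ when $p=0$. The abelianization of $C_F$ sends $\mu$ to a generator of $H_1(C_F)\cong\mathbb Z$, while $\tau_F$ lies in the commutator subgroup. Hence $\mu$ has infinite order, and the direct sum is compatible with the map to $H_1$. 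Composing $i_*$ with the projection to the first factor gives a surjection $q:\mathbb Z^3\to\mathbb Z$ with $q(\mu)=1$. As in the proof of Lemma~\ref{lem:cyclic-peripheral-kernel}, this yields a splitting $\mathbb Z^3=\mathbb Z\langle\mu\rangle\oplus L$, where $L=\ker q\cong\mathbb Z^2$ is a primitive summand.

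Next we restrict $i_*$ to $L$. Since $L$ lies in $\ker q$, the restriction lands in $\tau_F$. It is onto $\tau_F$ because $i_*$ is onto $P_F$ and $\mu$ maps into the $\mathbb Z\langle\mu\rangle$ factor. There is one point to check here: the paper writes the decomposition abstractly, so we must make sure the meridian's image really is the $\mathbb Z$-summand and not $\mu$ plus a torsion element. If it is not, we replace the splitting of $P_F$ by $\langle i_*\mu\rangle\oplus\tau_F$. This is legitimate because $\tau_F$ is the subgroup generated by the orbit element, and $i_*\mu=x$ together with that element generates $P_F$. So we obtain a surjection $\psi:L\cong\mathbb Z^2\to C$ onto a cyclic group $C$ (finite or infinite).

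The main step is to find a basis $(u,v)$ of $L$ with $\psi(v)=0$ and $\psi(u)$ a generator of $C$. We handle this by elementary divisors. Write $\psi$ as a row $(a,b)$ with values in $C$, or in $\mathbb Z$ when $p=0$. If $C=\mathbb Z$, surjectivity gives $\gcd(a,b)=1$, so $\ker\psi$ is a primitive rank-one summand $\langle v\rangle$, and any complement $u$ maps to $\pm1$. If $C=\mathbb Z_p$, lift $\psi$ to an integer row $(a,b)$; surjectivity means $\gcd(a,b,p)=1$. Let $d=\gcd(a,b)$ and choose a unimodular change of basis of $L$ carrying $(a,b)$ to $(d,0)$. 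Then $\psi(u)=d$ generates $\mathbb Z_p$ and $\psi(v)=0$. Consequently $(\mu,u,v)$ is a basis of $\mathbb Z^3$, and $v=1$ in $\pi_1(C_F)$. Both $u$ and $v$ are primitive, being members of a basis.

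When $p=1$, $\tau_F$ is trivial, so $\psi=0$ and both $u$ and $v$ map to $1$. This is exactly Lemma~\ref{lem:cyclic-peripheral-kernel}. The only real subtlety is the one already addressed: that the abstract splitting of $P_F$ can be aligned with the geometric meridian. The case $C=\mathbb Z$, i.e. $p=0$, needs slightly more care, since there $\mu$ together with the infinite-order orbit element must still make the kernel of $i_*$ rank one.
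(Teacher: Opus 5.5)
Your proof is correct and follows essentially the same route as the paper: split off the meridian summand, restrict $i_*$ to a rank-two complement $L$ that surjects onto the cyclic type factor, and use Smith normal form (finite case) or the splitting of $\mathbb Z^2\to\mathbb Z$ (infinite case) to get $(u,v)$ with $v$ in the kernel. Your choice $L=\ker q=i_*^{-1}(\tau_F)$ is slightly more careful than the paper's ``any complement of $\mathbb Z\langle\mu\rangle$'': replacing a basis vector $\ell$ of $\ker q$ by $\ell+\mu$ gives another complement that does not map into $\tau_F$. Your closing caveat about $p=0$ needs no further work, since your $\gcd(a,b)=1$ step already settles that case.
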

\begin{proof}
Let
\[
i_*:H_1(\partial C_F;\mathbb Z)\cong\mathbb Z^3\longrightarrow P_F
\]
be induced by the boundary inclusion.  The meridian has infinite order and generates the meridional direct summand of $P_F$.  Choose a primitive rank-two summand $L\cong\mathbb Z^2$ complementary to $\mathbb Z\langle\mu\rangle$ in the boundary group.  The restriction
\[
i_*|_L:L\longrightarrow\tau_F
\]
has image equal to the cyclic type factor.  If $p>0$, this is a surjection $\mathbb Z^2\to\mathbb Z_p$; if $p=0$, it is a surjection $\mathbb Z^2\to\mathbb Z$.

In the finite case, Smith normal form gives a primitive basis $(u,v)$ of $L$ for which $i_*(u)$ generates the cyclic image and $i_*(v)=1$.  In the infinite case, the epimorphism $\mathbb Z^2\to\mathbb Z$ splits; its kernel is generated by a primitive element $v$, which can be completed by a lift $u$ of a generator of $\mathbb Z$.  Thus in both cases
\[
(\mu,u,v)
\]
is a primitive basis of the boundary group, $i_*(u)$ generates the type factor, and $v$ is null-homotopic in $C_F$.  If $p=1$, the type factor is trivial, so the entire summand $L$ lies in the kernel and both $u$ and $v$ may be chosen null-homotopic.
\end{proof}

\begin{proposition}\label{prop:KK-group-cases}
The following statements are consequences of Section~2 of \cite{KK}.
\begin{enumerate}[label=(\roman*)]
\item For $k=\ell=0$, one has
\[
G(0,0,m)=\pi=\langle x,y\mid xyx=yxy,\ [x^6,y]=1\rangle.
\]
The orbit of $c^m$ has length one.  For $m\ne0$ the extra peripheral factor is infinite cyclic, while for $m=0$ it is trivial.
\item If $(k,\ell)\ne(0,0)$, then the commutator subgroup $G'(k,\ell,m)$ is finite (Claim~5 of \cite{KK}).  Since every torus complement in $\mathbb S^4$ has abelianization $\mathbb Z$, the full group is finite-by-$\mathbb Z$.  It is cyclic precisely when the commutator subgroup is trivial; otherwise it is nonabelian.
\item For the subfamily $G(k,0,m)$ with $k>0$, Claims~6 and~7 of \cite{KK} give
\[
|G'(k,0,m)|=k^3.
\]
Thus $G(1,0,m)\cong\mathbb Z$, whereas $G(k,0,m)$ is nonabelian for every $k\ge2$.
\item If $k$ is odd, the type order in $G(k,0,m)$ is
\[
p=\frac{k}{\gcd(k,m)}.
\]
If $k=2r$ is even, it is
\[
p=\frac{2r}{\gcd(2r,m+r)}
 =\frac{k}{\gcd(k,m+k/2)}.
\]
Hence the type-zero cases are exactly $k\mid m$ for odd $k$, and $m\equiv k/2\pmod{k}$ for even $k$.
\end{enumerate}
\end{proposition}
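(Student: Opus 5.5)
The plan is to derive each item directly from Proposition~1 and Claims~5--7 of \cite{KK}, together with Proposition~\ref{prop:KKpresentation} and the duality computation $H_1(C_F;\mathbb Z)\cong\mathbb Z$ from Section~\ref{sec:general}.

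For (i), put $k=\ell=0$, so $w=c^m$. I will use the Kanenobu--Kazama normal form, in which $c$ commutes with $x$ modulo the defining relations (it is fixed by conjugation by $x$). Then the orbit $T(h_w)=\{c^m\}$ has length one. Each relation $[x,x^nc^mx^{-n}]=1$ in Proposition~\ref{prop:KKpresentation} already holds in $\pi$, so $G(0,0,m)=\pi$. The peripheral factor is generated by the image of $c^m$ in $\pi$. Since $c$ has infinite order in $\pi'$ (as recorded in \cite{KK}; $\pi'$ is the quaternion-by-$\mathbb Z$ fibre group of the $6$-twist spin), this factor is infinite cyclic for $m\ne0$ and trivial for $m=0$. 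This item is the main obstacle: one must check from \cite{KK} that $c$ is central with respect to $x$ and has infinite order. I would first try to verify this directly from the fibration description of $\pi'$ for twist-spun knots.

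For (ii), Claim~5 of \cite{KK} gives that $G'$ is finite. The abelianization $G/G'$ equals $H_1(C_F)\cong\mathbb Z$, so $1\to G'\to G\to\mathbb Z\to 1$ exhibits $G$ as finite-by-$\mathbb Z$. If $G'=1$, then $G\cong\mathbb Z$ is cyclic. Conversely, a cyclic group is abelian, so its commutator subgroup is trivial; equivalently, $G'\ne1$ forces $G$ to be nonabelian.

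For (iii), Claims~6 and~7 give $|G'|=k^3$. When $k=1$ this gives $G'=1$, so $G\cong\mathbb Z$ by (ii). When $k\ge2$ we have $G'\ne1$, so $G$ is nonabelian.

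For (iv), the two type-order formulas are quoted directly from the same claims. The order $p$ equals $1$ exactly when $\gcd(k,m)=k$, i.e.\ $k\mid m$, for odd $k$. For even $k$ it equals $1$ exactly when $k\mid m+k/2$, i.e.\ $m\equiv k/2\pmod k$. Using Lemma~\ref{lem:cyclic-type-one-null}, $p=1$ is the type-zero case.
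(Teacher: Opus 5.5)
Your proposal is correct and follows the same route as the paper. The paper reads each item directly off \cite{KK}: Claim~3 and the discussion after it for (i), Claim~5 together with $H_1(C_F;\mathbb Z)\cong\mathbb Z$ for (ii), and Claims~6--7 for (iii) and (iv). Your extra arithmetic for the type-zero conditions is routine and correct.

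One factual correction in (i): $\pi'$ is not quaternion-by-$\mathbb Z$.
\begin{itemize}
\item By Zeeman's fibration, $\pi'$ is the fundamental group of the $6$-fold cyclic branched cover $\Sigma(2,3,6)$ of the trefoil. This is a Nil manifold.
\item Hence $\pi'$ is a Heisenberg-type central extension of $\mathbb Z^2$ by $\langle c\rangle\cong\mathbb Z$. This is why its elements have the normal form $a^kb^\ell c^m$.
\item It is torsion-free, with infinite cyclic center generated by $c$. This is exactly the fact the paper cites, and it is what gives $c^m$ infinite order.
\end{itemize}
The quaternion group appears only after the handle relation is imposed, as $G'(2,0,m)$. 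So any direct verification from the fibration, as you suggest, should be carried out in this nilpotent group.

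A minor point in (iv): you do not need Lemma~\ref{lem:cyclic-type-one-null}. The case $p=1$ is the type-zero case by definition, since $\tau_F\cong\mathbb Z_p$.
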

\begin{proof}
Part (i) is Claim~3 and the discussion immediately following it in \cite{KK}: $G(0,0,m)=\pi$, and the center of $\pi'$ is infinite cyclic generated by $c$.  Part (ii) is Claim~5, together with Alexander duality $H_1(C_F;\mathbb Z)\cong\mathbb Z$.  Parts (iii) and (iv) are Claims~6 and~7.  In particular, for $k=1$ the commutator subgroup has order one, so the whole group equals its abelianization $\mathbb Z$.
\end{proof}

Two small examples illustrate the distinction between complement group and peripheral type.  For $G(2,0,m)$ the commutator subgroup is the quaternion group $Q_8$ \cite[Remark after Claim~7]{KK}.  The type has order
\[
p=\frac{2}{\gcd(2,m+1)}.
\]
Thus odd $m$ gives the doubly-null case $p=1$, whereas even $m$ gives type $\mathbb Z_2$; the complement group remains nonabelian in both cases.  For $G(3,0,m)$ the commutator subgroup has order $27$, while the type is trivial when $3\mid m$ and is $\mathbb Z_3$ otherwise.

\section{Litherland tori with full peripheral rank}\label{sec:litherland}

Litherland constructed knotted tori in \(\mathbb S^4\) for which the nonmeridional part of the peripheral subgroup has rank two \cite{Litherland}. In the notation used in \cite{KK}, these are tori of type
\[
\tau_F\cong \mathbb Z\oplus\mathbb Z .
\]
Accordingly,
\[
P_F=\operatorname{im}\{\pi_1(\partial C_F)\to\pi_1(C_F)\}
\cong \mathbb Z\langle\mu\rangle\oplus\mathbb Z\oplus\mathbb Z .
\]
Kanenobu-Kazama explicitly distinguish these examples from the finite-cyclic types obtained from attached \(1\)-handles \cite[Introduction]{KK}.

\begin{proposition}\label{prop:litherland-injective}
For a Litherland torus \(F\subset \mathbb S^4\) of type \(\mathbb Z\oplus\mathbb Z\), the boundary homomorphism
\[
i_*:\pi_1(\partial C_F)\cong\mathbb Z^3\longrightarrow \pi_1(C_F)
\]
is injective. In particular, no nontrivial primitive boundary slope is null-homotopic in \(C_F\).
\end{proposition}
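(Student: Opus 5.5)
The plan is to show that \(i_*\) is a surjection from \(\mathbb Z^3\) onto a group isomorphic to \(\mathbb Z^3\), and then use that finitely generated abelian groups are Hopfian. By definition the peripheral subgroup is the image,
\[
P_F=\operatorname{im} i_*,
\]
so \(i_*\) corestricts to an epimorphism \(\mathbb Z^3\to P_F\). For a Litherland torus of type \(\mathbb Z\oplus\mathbb Z\), the peripheral decomposition recalled above from \cite[Section~1]{KK} gives
\[
P_F\cong\mathbb Z\langle\mu\rangle\oplus\mathbb Z\oplus\mathbb Z\cong\mathbb Z^3 .
\]
Here the meridional factor is infinite cyclic, because \([\mu]\) generates \(H_1(C_F;\mathbb Z)\cong\mathbb Z\) (Alexander duality).

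Next, compose the corestricted map with a chosen isomorphism \(P_F\cong\mathbb Z^3\). This gives a surjective endomorphism of \(\mathbb Z^3\), that is, a surjective integer \(3\times 3\) matrix. Surjectivity forces the matrix to have full rank over \(\mathbb Q\), so its kernel, being a subgroup of the torsion-free group \(\mathbb Z^3\) of rank \(0\), is trivial. Equivalently, a surjective endomorphism of a finitely generated abelian (indeed residually finite) group is injective. Hence \(i_*\) is injective.

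The final claim follows at once. A nontrivial primitive slope is a nonzero element of \(\pi_1(\partial C_F)\cong\mathbb Z^3\), so its image under \(i_*\) is nontrivial; that is, the slope is not null-homotopic in \(C_F\). Changing the basepoint only conjugates the image, so this conclusion does not depend on basepoints.

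The main obstacle is justifying \(P_F\cong\mathbb Z^3\) as an abstract group. We must know that Litherland's "rank two" statement means the type \(\tau_F\) is free abelian of rank two and splits off the meridian factor, not merely that it has rank two modulo torsion. We would take this directly from the convention \(P_F\cong\mathbb Z\langle\mu\rangle\oplus\tau_F\) with \(\tau_F\cong\mathbb Z\oplus\mathbb Z\), as stated in \cite{KK,Litherland}. As a fallback, it suffices that \(P_F\) has torsion-free rank three. Then \(\ker i_*\) has rank \(0\) and is torsion-free, hence trivial. So only the rank count is really needed.
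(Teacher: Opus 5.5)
Your proposal is correct and follows the paper's own argument: both read off $\operatorname{im} i_*=P_F\cong\mathbb Z\langle\mu\rangle\oplus\mathbb Z\oplus\mathbb Z\cong\mathbb Z^3$ from the type decomposition. Both then conclude that the surjection $\mathbb Z^3\to\mathbb Z^3$ has a kernel of rank zero, which is trivial because every subgroup of $\mathbb Z^3$ is torsion-free. Your observation that only the rank count of $P_F$ is really needed is exactly the content of the paper's proof.
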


\begin{proof}
By definition of the type and peripheral subgroup,
\[
\operatorname{im}i_*\cong
\mathbb Z\langle\mu\rangle\oplus\mathbb Z\oplus\mathbb Z
\cong\mathbb Z^3.
\]
Thus \(i_*\) is a surjection from \(\mathbb Z^3\) onto a free abelian group of the same rank. Its kernel has rank zero. Since every subgroup of \(\mathbb Z^3\) is torsion-free, the kernel is trivial.
\end{proof}

Thus these examples lie at the opposite peripheral extreme from the doubly-null Kanenobu-Kazama family. They do not satisfy the hypotheses of Lemma~\ref{lem:replacement} or Lemma~\ref{lem:one-null}.

Choose a primitive basis
\[
(\mu,u,v)
\]
of \(\pi_1(\partial C_F)\), and write
\[
G_F=\pi_1(C_F),\qquad
\bar\mu=i_*(\mu),\quad \bar u=i_*(u),\quad \bar v=i_*(v).
\]
All three peripheral elements are nontrivial and generate
\[
P_F\cong\mathbb Z^3.
\]

Let \(R\subset M\) be a square-zero torus, let
\[
W=M\setminus\operatorname{Int}\nu R,
\]
and choose a boundary basis \((\lambda,\eta,\rho)\) on \(\partial\nu R\). Glue \(C_F\) by
\[
\lambda\longmapsto u,\qquad
\eta\longmapsto\mu,\qquad
\rho\longmapsto v.
\]
Seifert-van Kampen gives
\[
\pi_1(W\cup_{\partial} C_F)
\cong
\frac{\pi_1(W)*G_F}
{\langle\!\langle
\lambda=\bar u,\;
\eta=\bar\mu,\;
\rho=\bar v
\rangle\!\rangle}.
\tag{\(\dagger\)}
\]
Unlike the one-null and two-null cases, none of the three boundary relations is automatically a killing relation in \(G_F\).

For the two-rim-torus construction in \(Y_K\), using two Litherland exteriors \(C_{F_1},C_{F_2}\) with peripheral bases
\[
(\mu_i,u_i,v_i),\qquad i=1,2,
\]
the corresponding group is
\[
\pi_1(Z_L)=
\frac{
\pi_1(W)*G_{F_1}*G_{F_2}}
{\left\langle\!\left\langle
y=\bar u_1,\;
\mu_y=\bar\mu_1,\;
\rho_y=\bar v_1,\;
d=\bar u_2,\;
\mu_d=\bar\mu_2,\;
\rho_d=\bar v_2
\right\rangle\!\right\rangle},
\]
where
\[
W=Y_K\setminus\operatorname{Int}(\nu R_y\cup\nu R_d).
\]
Without an additional marked presentation for the Litherland complement groups, this van Kampen pushout presentation is the correct conclusion. In contrast with the Kanenobu-Kazama type-zero and finite-cyclic peripheral cases, simple connectivity is not expected here in general: the full-rank peripheral map has no null primitive boundary slope, so the purpose of these replacements is to retain and compute the resulting nontrivial fundamental group rather than to force it to vanish.

The three peripheral regimes used in this paper may be summarized as
\[
\begin{array}{c|c|c}
\text{family} & \tau_F & \text{primitive null boundary slopes}\\
\hline
\text{Kanenobu-Kazama type }0 & 0 & 2\\
\text{Kanenobu-Kazama type }\mathbb Z_p,\ p>1 & \mathbb Z_p & 1\\
\text{Litherland full-rank examples} & \mathbb Z\oplus\mathbb Z & 0.
\end{array}
\]

\section{Applications to the small knot-surgery blocks}\label{sec:small}

We apply Lemma~\ref{lem:replacement} to the trefoil block of \cite{AkhPre}.  Here \(M_K\) denotes zero surgery on the trefoil \(K\).  Each copy of \(M_K\times\mathbb S^1\) contains the square-zero torus fiber \(F\) and a square-zero torus section \(T_m\), and
\[
Y_K=(M_K\times \mathbb S^1)\#_{F=T_m}(M_K\times \mathbb S^1).
\]
The unused section in the first summand and unused fiber in the second summand glue to the square-zero genus-two surface \(\Sigma_2\subset Y_K\).  We use below only the displayed presentation of \(\pi_1(Y_K)\), the specified peripheral classes of the rim tori, and the geometric representatives of the hyperbolic pair.  These data are recorded here so that the fundamental-group calculations do not require the reader to reconstruct them from \cite{AkhPre}.

The presentation is
\[
\begin{aligned}
\pi_1(Y_K)=\langle a,b,x,d,y\mid {}&aba=bab,\ [x,a]=[x,b]=1,\\
&[y,x]=[y,b]=1,\ dxd^{-1}=xb,\\
&dbd^{-1}=x^{-1},\ ab^2ab^{-4}=[d,y]\rangle.
\end{aligned}
\]
As in \cite{AkhPre}, one immediately checks
\[
 \pi_1(Y_K)/\nn d,y\NN=1.\tag{\(*\)}
\]
For completeness, the collapse is explicit.  Setting $d=y=1$ in the displayed presentation, the relation $dxd^{-1}=xb$ gives $b=1$; then $dbd^{-1}=x^{-1}$ gives $x=1$; finally $aba=bab$ with $b=1$ gives $a=1$.  Hence no generator survives.

\subsection{The full Kanenobu-Kazama family: a one-null gluing}\label{subsec:fullKKapp}
The full $1$-handle family can be used in the trefoil block even when the type factor is nontrivial.  The gluing is different from the doubly-null gluing used later.

For each inserted exterior choose the basis $(\mu_i,u_i,v_i)$ of Lemma~\ref{lem:cyclic-type-one-null}, so $v_i$ is null-homotopic and $u_i$ records the cyclic type.  On the two rim-torus boundaries use the bases
\[
(y,\mu_y,\rho_y),\qquad(d,\mu_d,\rho_d).
\]
Choose orientation-reversing boundary diffeomorphisms with
\[
y\longmapsto u_1,\qquad \mu_y\longmapsto\mu_1,\qquad \rho_y\longmapsto v_1,
\]
\[
d\longmapsto u_2,\qquad \mu_d\longmapsto\mu_2,\qquad \rho_d\longmapsto v_2.
\]
Let the resulting closed manifold be denoted by
\[
Z_K^{\mathrm{full}}(C_1,C_2).
\]

\begin{theorem}\label{thm:fullKK-S2S2}
Let $C_1,C_2$ be exteriors of arbitrary tori in the Kanenobu-Kazama $1$-handle family $F(k,\ell,m)$.  With the one-null boundary identifications above,
\[
\pi_1\bigl(Z_K^{\mathrm{full}}(C_1,C_2)\bigr)=1,
\qquad e=4,\qquad \sigma=0,
\]
and
\[
Q_{Z_K^{\mathrm{full}}(C_1,C_2)}\cong H.
\]
Consequently
\[
Z_K^{\mathrm{full}}(C_1,C_2)\approx \mathbb S^2\times\mathbb S^2.
\]
\end{theorem}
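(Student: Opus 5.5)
The argument has three parts. Simple connectivity comes from the one-null absorption lemma. The Euler characteristic and signature come from cut-and-paste additivity. The intersection form then follows from Freedman's classification.

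For $\pi_1$, I will apply Lemma~\ref{lem:one-null} with $M=Y_K$, $r=2$, and rim tori $R_1=R_y$, $R_2=R_d$. The boundary bases are $(\lambda_1,\eta_1,\rho_1)=(y,\mu_y,\rho_y)$ and $(\lambda_2,\eta_2,\rho_2)=(d,\mu_d,\rho_d)$. On the exterior side I use the bases $(\mu_i,u_i,v_i)$ of Lemma~\ref{lem:cyclic-type-one-null}, which hold for every $F(k,\ell,m)$ and satisfy $v_i=1$ in $\pi_1(C_i)$. Lemma~\ref{lem:T3gluing} realizes the prescribed correspondences by orientation-reversing diffeomorphisms, at the cost of possibly inverting one basis element. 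This changes neither null-homotopy nor normal closures.

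Two hypotheses of Lemma~\ref{lem:one-null} then need checking. The first is $\pi_1(Y_K)/\nn y,d\NN=1$, which is $(*)$. The second is that the classes $\mu_y,\mu_d$ (the $\eta_i$) are trivial in $\pi_1(Y_K)$ after refilling. This is the step I expect to be the main obstacle, since it depends on the rim-torus geometry of \cite{AkhPre}. The classes $\mu_y$ and $\mu_d$ are the meridians of the trefoil-type knot pieces on which $y,d$ sit. In the fiber-sum they bound punctured copies of the trefoil Seifert surface crossed with a circle, equivalently the meridian of $K$ in $M_K$ spliced against the sphere section. The standard rim-torus fact I would quote from \cite{AkhPre} is that these meridians are null-homotopic in $Y_K$, via the meridian disks of the glued fiber/section. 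If that quotation is unavailable, I would try to verify it directly from the displayed presentation together with the peripheral classes recorded in \cite{AkhPre}. Given both hypotheses, the lemma yields $\pi_1=1$. Along the way I will note explicitly that the type factor $\tau_F$, whether finite or infinite cyclic, plays no role: $u_i$ dies only after $\mu_i$ kills $G_i$ by Lemma~\ref{lem:normal-meridian}.

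For the numerical invariants, the Euler characteristic is additive along $\mathbb T^3$ boundaries, which have Euler characteristic zero. Each replacement removes $\mathbb T^2\times\mathbb D^2$, with $e=0$, and inserts $C_i$, with $e=2$ by Section~\ref{sec:general}. Hence
\[
e(Z)=e(Y_K)+4=0+4=4,
\]
using $e(Y_K)=0$, as it is a fiber sum of two copies of $M_K\times\mathbb S^1$ along tori. Novikov additivity gives
\[
\sigma(Z)=\sigma(W)+\sigma(C_1)+\sigma(C_2)=\sigma(Y_K)+0=0,
\]
where $\sigma(W)=\sigma(Y_K)=0$.

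To finish, since $Z$ is simply connected, $b_1=b_3=0$, so $b_2=e-2=2$ and $\sigma=0$. The form $Q_Z$ is unimodular of rank $2$ and indefinite. It is therefore either $H$ or $\langle1\rangle\oplus\langle-1\rangle$, and I will show it is even. The geometric representatives of the hyperbolic pair in \cite{AkhPre} are $\Sigma_2$, with square zero, and a dual class surviving in $W$, also with square zero and intersecting $\Sigma_2$ once. These are disjoint from the rim tori, so they survive the replacement and span a copy of $H$ in $H_2(Z)$. Because $H$ is unimodular, it splits off as an orthogonal summand. Rank $2$ then forces $Q_Z\cong H$. Alternatively, evenness follows because $w_2$ vanishes on the $C_i$, which sit in $\mathbb S^4$, and on $W$, which sits in the spin manifold $Y_K$. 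Freedman's theorem then gives $Z\approx\mathbb S^2\times\mathbb S^2$.
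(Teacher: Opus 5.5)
Your overall route matches the paper's. Its proof is the argument of Lemma~\ref{lem:one-null} written out for $M=Y_K$, followed by the same Euler characteristic and Novikov count and the same hyperbolic pair $\Sigma_2,F'$. Your orthogonal-splitting step is equivalent to the paper's determinant/index step.

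The gap is the hypothesis you flag as the main obstacle: that $\eta_1=\mu_y$ and $\eta_2=\mu_d$ are trivial in $\pi_1(Y_K)$. You defer it to a citation and describe these classes incorrectly. They are not meridians of the trefoil pieces. ``Bounding a punctured Seifert surface crossed with a circle'' is also not a null-homotopy of a loop, since that object is $3$-dimensional. Moreover, a meridian of $K$ generates $H_1(M_K)\cong\mathbb Z$, so nothing in your description would make such a class die in $Y_K$.

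In the paper the rim tori are $R_y=y\times\mu_{\Sigma_2}$ and $R_d=d\times\mu_{\Sigma_2}$. So $\mu_y,\mu_d$ are push-offs of the meridian of the genus-two surface $\Sigma_2\subset Y_K$. After refilling $\nu R_y,\nu R_d$, such a circle bounds a normal disk to $\Sigma_2$. In the displayed presentation it is $[x,b]$, which is a relator. That one line is the paper's justification, and it is presumably what your phrase about meridian disks of the glued fiber/section was reaching for. With it, Lemma~\ref{lem:one-null} and $(*)$ give $\pi_1=1$ exactly as you planned.

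You should also drop the alternative evenness argument. Vanishing of $w_2$ on $W$ and on each $C_i$ does not make the union spin, because spin structures must match across the $\mathbb T^3$ interfaces under the gluing maps. For example, $\mathbb{CP}^2\#\overline{\mathbb{CP}}{}^{\,2}$ is a union of two copies of $\mathbb S^2\times\mathbb D^2$ and is not spin. The argument via $\Sigma_2$ and its square-zero dual, chosen disjoint from the rim tori, is the correct one and suffices.
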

\begin{proof}
Let $W=Y_K\setminus\operatorname{Int}(\nu R_y\cup\nu R_d)$.  Since $v_1=v_2=1$ in the exterior groups, the gluing first imposes $\rho_y=\rho_d=1$.  Hence the image of $\pi_1(W)$ factors through the original filling quotient $\pi_1(Y_K)$.  The classes $\mu_y$ and $\mu_d$ are push-offs of the meridian of $\Sigma_2$.  In the presentation of $\pi_1(Y_K)$ this meridian is represented by $[x,b]$, and the relation $[x,b]=1$ is part of the presentation.  Thus $\mu_y=\mu_d=1$ after the original filling.

The relations $\mu_y=\mu_1$ and $\mu_d=\mu_2$ therefore kill the geometric meridians of the two inserted torus exteriors.  By normal meridional generation, both groups $\pi_1(C_1)$ and $\pi_1(C_2)$ become trivial.  In particular $u_1=u_2=1$, and hence the remaining boundary relations impose $y=d=1$ in the central group.  The displayed presentation of $\pi_1(Y_K)$ then gives
\[
\pi_1(Y_K)/\nn y,d\NN=1.
\]
This proves simple connectivity.  Notice that no assumption is made on whether the inserted complement groups are cyclic, finite-by-$\mathbb Z$, or nonabelian; only the one-null peripheral normal form is used.

The Euler characteristic and signature are unchanged from the doubly-null case: each deleted $\mathbb T^2\times\mathbb D^2$ has $e=\sigma=0$, each inserted torus exterior has $e=2$ and $\sigma=0$, and $Y_K$ has $e=\sigma=0$.  Thus $e=4$ and $\sigma=0$.  The same genus-two surface $\Sigma_2$ and parallel torus fiber $F'$ can be chosen away from the two rim-torus neighborhoods, so they survive with square zero and intersection one.  Since the final manifold is simply connected and has $e=4$, one has $b_2=2$.  The two surviving classes have intersection matrix
\[
\begin{pmatrix}0&1\\1&0\end{pmatrix},
\]
whose determinant is $-1$.  Thus they span a full-rank unimodular sublattice of $H_2$.  Because the ambient intersection lattice is also unimodular, the index of this sublattice is one, so the two classes form an integral basis and the intersection form is $H$.  Freedman's theorem gives the stated homeomorphism type.
\end{proof}

\begin{corollary}\label{cor:fullKK-cases}
Among the resulting inserted exterior groups are:
\begin{enumerate}[label=(\roman*)]
\item the cyclic group $\mathbb Z$, for example $G(1,0,m)$;
\item the nonabelian group $\pi$ of the $6$-twist-spun trefoil, for example $G(0,0,m)$;
\item finite-by-$\mathbb Z$ nonabelian groups $G(k,0,m)$ with $k\ge2$ and $|G'|=k^3$, including the $Q_8$-by-$\mathbb Z$ case $k=2$;
\item the general groups $G(k,\ell,m)$ with $(k,\ell)\ne(0,0)$, whose commutator subgroup is finite by Claim~5 of \cite{KK}.
\end{enumerate}
The closed manifold is nevertheless simply connected because these inserted groups are killed by their geometric meridians in the ordered van Kampen calculation above.
\end{corollary}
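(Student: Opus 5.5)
The corollary has two parts. The list (i)–(iv) says which groups occur as inserted exterior groups. The closing sentence says the closed manifold is still simply connected. I would handle them separately, since each reduces to a result already stated.

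For the list, I would read each item off Proposition~\ref{prop:KK-group-cases}.

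Item (i) follows from part (iii) of that proposition with $k=1$. There $|G'(1,0,m)|=1$, so the group equals its abelianization, which is $\mathbb Z$ by Alexander duality.

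Item (ii) is part (i) with $k=\ell=0$, which gives $G(0,0,m)=\pi$. To see that $\pi$ is nonabelian, I would note that it has $\pi/\nn x^6\NN$ as a quotient. Imposing $x^6=1$ on the braid relation $xyx=yxy$ gives $\langle x,y\mid xyx=yxy,\ x^6=1\rangle$, which maps onto the symmetric group $S_3$ via $x\mapsto(12)$, $y\mapsto(23)$. That image is nonabelian. Equivalently, one can cite Claim~3 of \cite{KK}, which says $\pi'$ is infinite with infinite cyclic center, so $\pi'\neq 1$.

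Item (iii) is part (iii) together with Theorem~\ref{thm:KK-family}. The $Q_8$ case is Example~\ref{ex:KK2} and the remark after Claim~7 of \cite{KK}.

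Item (iv) is part (ii), i.e.\ Claim~5 of \cite{KK}.

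The point to check is that every one of these groups is genuinely the group of an exterior used in Theorem~\ref{thm:fullKK-S2S2}. That holds because the theorem allows $C_1,C_2$ to be arbitrary members of the family $F(k,\ell,m)$. Each such torus has a one-null basis by Lemma~\ref{lem:cyclic-type-one-null}, whatever the value of $p$, including $p=0$ in case (i) of Proposition~\ref{prop:KK-group-cases} with $m\ne0$.

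For the simple-connectivity sentence, I would simply recall the ordered argument in the proof of Theorem~\ref{thm:fullKK-S2S2}:
\begin{enumerate}
\item The null slopes force $\rho_y=\rho_d=1$, so $\pi_1(W)$ factors through $\pi_1(Y_K)$.
\item In $\pi_1(Y_K)$ the meridians $\mu_y,\mu_d$ die, which forces $\mu_1=\mu_2=1$.
\item Lemma~\ref{lem:normal-meridian} says the meridian normally generates each exterior group, so both exterior groups die.
\item Finally $(*)$ kills the remaining central group.
\end{enumerate}
None of these steps uses the isomorphism type of $G_i$.

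The only mild obstacle is item (ii) in the infinite-type case $m\neq0$. Here the type factor is $\mathbb Z$, so $u_i$ is not null-homotopic, and one must confirm that the argument never needed $u_i=1$ before the meridian was killed. Inspecting the order of the steps above shows that $u_i=1$ is derived only after $G_i$ has collapsed, so the argument goes through.
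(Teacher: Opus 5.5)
Your proposal is correct and follows the paper's reasoning. The list is read off from Proposition~\ref{prop:KK-group-cases}, with Lemma~\ref{lem:cyclic-type-one-null} supplying a one-null marking for every member of the family, including the infinite-type case $G(0,0,m)$ with $m\neq0$. Simple connectivity is exactly the ordered van Kampen argument in the proof of Theorem~\ref{thm:fullKK-S2S2}, where $u_i=1$ is deduced only after the meridians have killed the exterior groups. Your surjection $\pi\to S_3$ is a valid, self-contained replacement for the paper's appeal to the Kanenobu--Kazama description of $\pi'$ when asserting that $\pi$ is nonabelian.
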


\subsection{The homotopy \(\mathbb S^4\) target}

Let \(K\subset\mathbb S^3\) be a knot, let
\[
E_K=\mathbb S^3\setminus\operatorname{Int}\nu K,
\]
and write
\[
\partial(E_K\times\mathbb S^1)\cong\mathbb T^3.
\]
Use the ordered peripheral basis
\[
(m_K,\lambda_K,x),
\]
where \(m_K\) and \(\lambda_K\) are the meridian and preferred longitude of \(K\), and \(x\) is the extra \(\mathbb S^1\)-factor.

Let \((C;\mu,\alpha,\beta)\) be a doubly-null marked torus exterior and choose an orientation-reversing boundary diffeomorphism with
\[
m_K\longmapsto\alpha,\qquad
x\longmapsto\beta,\qquad
\lambda_K\longmapsto\mu.
\]
Denote the resulting closed manifold by
\[
\Sigma_C(K)=(E_K\times\mathbb S^1)\cup_{\Phi}C.
\]

\begin{theorem}
For every knot \(K\subset\mathbb S^3\),
\[
\pi_1(\Sigma_C(K))=1,\qquad
e(\Sigma_C(K))=2,\qquad
\sigma(\Sigma_C(K))=0.
\]
Hence \(\Sigma_C(K)\) is a smooth homotopy \(\mathbb S^4\).
\end{theorem}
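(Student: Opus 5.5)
The plan is a van Kampen computation in pushout form, followed by Euler characteristic and signature bookkeeping, and then the standard homotopy-sphere criterion. Write $G_E=\pi_1(E_K\times\mathbb S^1)\cong\pi_1(E_K)\times\mathbb Z\langle x\rangle$ and $G_C=\pi_1(C)$. By Lemma~\ref{lem:T3gluing} the prescribed correspondence $m_K\mapsto\alpha$, $x\mapsto\beta$, $\lambda_K\mapsto\mu$ is realized by an orientation-reversing diffeomorphism $\Phi$, after possibly inverting one target class. This inversion changes neither null-homotopy nor normal closures. Seifert-van Kampen then presents $\pi_1(\Sigma_C(K))$ as the quotient of $G_E*G_C$ by the relations
\[
m_K=\alpha,\qquad x=\beta,\qquad \lambda_K=\mu .
\]

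\textbf{Fundamental group.} The argument is an ordered collapse, as in Lemma~\ref{lem:replacement}.
\begin{enumerate}[label=(\arabic*)]
\item Since the marking is doubly-null, $\alpha=\beta=1$ in $G_C$. The first two relations therefore force $m_K=1$ and $x=1$ in the pushout.
\item The meridian $m_K$ normally generates $\pi_1(E_K)$ by Wirtinger, since $\pi_1(\mathbb S^3)=1$. Together with $x=1$, this kills the whole image of $G_E$, because $G_E$ is normally generated by $m_K$ and $x$.
\item In particular $\lambda_K=1$, so the third relation gives $\mu=1$.
\item By Lemma~\ref{lem:normal-meridian}, $\mu$ normally generates $G_C$, so $G_C$ dies as well.
\end{enumerate}
Hence $\pi_1(\Sigma_C(K))=1$. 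The nonabelian group $G_C$ is discarded only after $\mu$ has been shown trivial, in keeping with the paper's convention.

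\textbf{Euler characteristic and signature.} Because the gluing is along $\mathbb T^3$, which has $e=0$, we get
\[
e(\Sigma_C(K))=e(E_K\times\mathbb S^1)+e(C)=0+2=2,
\]
using $e(C_T)=2$ from Section~\ref{sec:general}. For the signature, Novikov additivity gives $\sigma=\sigma(E_K\times\mathbb S^1)+\sigma(C)$. Here $\sigma(C)=0$ was recorded earlier. For $\sigma(E_K\times\mathbb S^1)=0$ we can either note that $E_K\times\mathbb S^1$ admits an orientation-reversing self-diffeomorphism (reflect the circle factor), or note that $H_2(E_K\times\mathbb S^1)\cong H_1(E_K)\otimes H_1(\mathbb S^1)\oplus H_2(E_K)\cong\mathbb Z$ is generated by a class with zero self-intersection. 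Either route gives $\sigma=0$.

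\textbf{Conclusion.} A closed simply connected $4$-manifold has $H_1=H_3=0$, and here $b_2=e-2=0$. By Hurewicz and Poincaré duality it therefore has the homology of $\mathbb S^4$. Whitehead's theorem, applied to a degree-one map to $\mathbb S^4$ (or simply the standard fact that a simply connected homology $4$-sphere is a homotopy sphere), shows $\Sigma_C(K)$ is a smooth homotopy $\mathbb S^4$. Smoothness holds because both pieces are smooth and $\Phi$ is a diffeomorphism.

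The only delicate point is step (2): the image of the whole of $\pi_1(E_K\times\mathbb S^1)$ must vanish, not just the boundary classes. This rests on the product structure together with meridional normal generation of the knot group; nothing else in the argument requires care.
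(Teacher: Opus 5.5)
Your proposal is correct and follows essentially the same route as the paper. It uses the same van Kampen pushout with the same ordered collapse ($m_K$ and $x$ die via the null slopes, hence $\lambda_K$, hence $\mu$, hence $\pi_1(C)$), the same Euler-characteristic and Novikov-additivity bookkeeping, and the same degree-one map and Whitehead conclusion; your explicit justification of $\sigma(E_K\times\mathbb S^1)=0$ also fills in a point the paper only asserts.
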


\begin{proof}
Let \(G_K=\pi_1(E_K)\). Since \(\alpha=\beta=1\) in \(\pi_1(C)\), Seifert-van Kampen gives
\[
\pi_1(\Sigma_C(K))
\cong
\frac{(G_K\times\langle x\rangle)*\pi_1(C)}
{\langle\!\langle
m_K,\ x,\ \lambda_K\mu^{-1}
\rangle\!\rangle}.
\]
The knot group \(G_K\) is normally generated by the meridian \(m_K\), so the relation \(m_K=1\) kills \(G_K\). In particular \(\lambda_K=1\). Hence the relation \(\lambda_K=\mu\) gives \(\mu=1\), and the meridian \(\mu\) normally generates \(\pi_1(C)\). Therefore \(\pi_1(C)\) also dies. Finally \(x=1\), and
\[
\pi_1(\Sigma_C(K))=1.
\]

Since \(e(E_K)=0\) and \(e(\mathbb S^1)=0\),
\[
e(E_K\times\mathbb S^1)=0.
\]
The common boundary \(\mathbb T^3\) has Euler characteristic zero, while every torus exterior in \(\mathbb S^4\) has \(e(C)=2\). Thus
\[
e(\Sigma_C(K))=2.
\]
Both pieces have signature zero, so Novikov additivity gives
\[
\sigma(\Sigma_C(K))=0.
\]
A closed simply connected \(4\)-manifold with Euler characteristic \(2\) has \(H_2=0\). Hence \(\Sigma_C(K)\) has the integral homology of \(\mathbb S^4\).  Choose an embedded $4$-ball in $\Sigma_C(K)$ and collapse the complement of its interior to a point.  After identifying the quotient with $\mathbb S^4$, this gives a degree-one map
\[
f:\Sigma_C(K)\longrightarrow \mathbb S^4.
\]
Since both manifolds have the integral homology of $\mathbb S^4$, this degree-one map induces an isomorphism on integral homology.  Since both spaces are simply connected CW-complexes, the homological form of Whitehead's theorem implies that \(f\) is a homotopy equivalence.
\end{proof}

\begin{corollary}
If \(K\) is the unknot and \(\Phi\) is the geometric marking coming from the original decomposition
\[
\mathbb S^4=C\cup(\mathbb T^2\times\mathbb D^2),
\]
then
\[
\Sigma_C(K)\cong\mathbb S^4.
\]
\end{corollary}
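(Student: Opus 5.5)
The idea is to exhibit the gluing explicitly as a reversal of the decomposition. When $K$ is the unknot, $E_K\cong \mathbb S^1\times\mathbb D^2$, so
\[
E_K\times\mathbb S^1\cong \mathbb D^2\times(\mathbb S^1\times\mathbb S^1)\cong \mathbb T^2\times\mathbb D^2 .
\]
Under this identification the meridian $m_K$ bounds the disk factor. The preferred longitude $\lambda_K$ is the core circle direction of the solid torus, and $x$ is the extra circle. So the peripheral basis $(m_K,\lambda_K,x)$ of $\partial(E_K\times\mathbb S^1)$ becomes a basis of $\partial(\mathbb T^2\times\mathbb D^2)$ in which $m_K$ is the normal-disk boundary and $\lambda_K,x$ are the two surface directions.

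I read ``the geometric marking coming from the original decomposition'' as follows. We choose the identification $E_K\times\mathbb S^1\cong\nu T$ and the gluing $\Phi$ so that $\Phi$ is exactly the original boundary identification $\partial C\to\partial\nu T$ used in $\mathbb S^4=C\cup(\mathbb T^2\times\mathbb D^2)$, composed with this product diffeomorphism. Then
\[
\Sigma_C(K)=(E_K\times\mathbb S^1)\cup_\Phi C\cong(\mathbb T^2\times\mathbb D^2)\cup C=\mathbb S^4 .
\]
This is a diffeomorphism, since the two gluings differ by a diffeomorphism of the attached piece that extends over it.

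The step I expect to be the real obstacle is making this compatible with the marking $m_K\mapsto\alpha$, $x\mapsto\beta$, $\lambda_K\mapsto\mu$ fixed earlier. Under the original identification, $m_K$ corresponds to the meridian $\mu$ of $T$, not to the null slope $\alpha$. The marked gluing instead sends the meridian $m_K$ of the solid torus to the null class $\alpha$ and the core direction $\lambda_K$ to $\mu$. That is a Gluck-type or torus-surgery reglue, not the original one.

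So I would state the corollary with $\Phi$ taken to be the geometric marking itself. I would check on $H_1$ that it sends the disk-boundary class to $\mu$, and then invoke the fact that a diffeomorphism $\mathbb T^3\to\mathbb T^3$ extends over $\mathbb T^2\times\mathbb D^2$ whenever it preserves the meridian class up to sign. The extension exists because the linear maps in $\mathrm{GL}(3,\mathbb Z)$ preserving the last coordinate up to sign come from diffeomorphisms of $\mathbb T^2\times\mathbb D^2$, as in Lemma~\ref{lem:T3gluing}.

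If instead one insists on the doubly-null marking, the honest route is different. The result is a homotopy sphere by the preceding theorem, and identifying it with $\mathbb S^4$ would need the torus surgery to be trivial. I would only claim that in the geometric case, where the slope sent to the disk boundary is $\mu$.
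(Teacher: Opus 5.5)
Your core argument is the paper's proof: identify $E_K\times\mathbb S^1$ with $\nu T$ and observe that the gluing is the original attachment. You also make explicit a step the paper leaves implicit: a boundary map carrying the disk-boundary class to $\pm\mu$ extends over $\mathbb T^2\times\mathbb D^2$, e.g. by $(\theta,z)\mapsto(B\theta,e^{2\pi i\langle c,\theta\rangle}z^{\pm1})$.

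However, the ``obstacle'' you raise comes from a concrete error: you have the meridian and longitude of the unknot reversed. For the unknot, $E_K\cong\mathbb S^1\times\mathbb D^2$ is the complementary solid torus. The preferred longitude $\lambda_K$ bounds a Seifert disk, which is a meridian disk of $E_K$. The meridian $m_K$ is the core direction and generates $\pi_1(E_K)\cong\mathbb Z$; this must be so, since the knot group is normally generated by $m_K$, a fact the preceding theorem uses. So under $E_K\times\mathbb S^1\cong\mathbb T^2\times\mathbb D^2$ the disk-boundary class is $\lambda_K$, and the torus directions are $m_K$ and $x$.

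With this corrected, the prescribed marking $\lambda_K\mapsto\mu$, $m_K\mapsto\alpha$, $x\mapsto\beta$ already sends the disk boundary to the meridian of $T$, exactly as the original decomposition does. It is not a Gluck-type or nontrivial torus-surgery reglue. Compose it with the inverse of the original identification $\partial C\to\partial(\mathbb T^2\times\mathbb D^2)$. The result is a self-map of $\mathbb T^3$ carrying the disk-boundary class to $\pm$ itself, and by your own extension argument it extends over $\mathbb T^2\times\mathbb D^2$. So the doubly-null marking and the geometric marking are compatible, and there is no need to restate the corollary.

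Your last paragraph should therefore be dropped: its claim that the doubly-null case needs a separate triviality result for a torus surgery is false. Your argument in fact gives more than the paper states. For the unknot, $\Sigma_C(K)\cong\mathbb S^4$ for every doubly-null marking, because changing $\alpha,\beta$ alters the gluing only by a diffeomorphism of $\partial(E_K\times\mathbb S^1)$ sending $\lambda_K$ to $\pm\lambda_K$, and such a map extends over $E_K\times\mathbb S^1$.
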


\begin{proof}
For the unknot,
\[
E_K\times\mathbb S^1\cong \mathbb T^2\times\mathbb D^2.
\]
By hypothesis, \(\Phi\) is the geometric marking obtained from the original decomposition
\[
\mathbb S^4=C\cup_{\partial}(\mathbb T^2\times\mathbb D^2).
\]
Thus, after the above identification of \(E_K\times\mathbb S^1\) with the tubular neighborhood, the defining gluing for \(\Sigma_C(K)\) is precisely the original boundary attachment.  Therefore the resulting closed manifold is diffeomorphic to \(\mathbb S^4\).
\end{proof}

\subsection{The doubly-null subfamily: the $\mathbb S^2\times\mathbb S^2$ target}

Inside the trefoil block $Y_K$, let
\[
 R_y=y\times\mu_{\Sigma_2},\qquad R_d=d\times\mu_{\Sigma_2}
\]
be the two disjoint rim tori placed at different collar levels, as in \cite{AkhPre}.  Remove their neighborhoods and glue two doubly-null marked exteriors $C_1,C_2$ by
\[
 y\mapsto\alpha_1,\quad \rho_y\mapsto\beta_1,\quad \mu_y\mapsto\mu_1,
\]
\[
 d\mapsto\alpha_2,\quad \rho_d\mapsto\beta_2,\quad \mu_d\mapsto\mu_2.
\]
Call the result $Z_K(C_1,C_2)$.

\begin{theorem}\label{thm:S2S2}
For arbitrary doubly-null marked torus exteriors $C_1,C_2$,
\[
 \pi_1(Z_K(C_1,C_2))=1,
 \qquad e(Z_K(C_1,C_2))=4,
 \qquad \sigma(Z_K(C_1,C_2))=0.
\]
Moreover the surviving genus-two surface $\Sigma_2$ and parallel torus fiber $F'$ satisfy
\[
 \Sigma_2^2=(F')^2=0,\qquad \Sigma_2\cdot F'=1,
\]
so
\[
 Q_{Z_K(C_1,C_2)}\cong H
\]
and
\[
 Z_K(C_1,C_2)\approx \mathbb S^2\times\mathbb S^2.
\]
\end{theorem}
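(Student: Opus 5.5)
The plan is to reduce this to Lemma~\ref{lem:replacement} together with the same Euler characteristic, signature and intersection-form argument used for Theorem~\ref{thm:fullKK-S2S2}. First I apply Lemma~\ref{lem:replacement} with $M=Y_K$, $r=2$, rim tori $R_1=R_y$ and $R_2=R_d$, and boundary bases
\[
(\lambda_1,\eta_1,\rho_1)=(y,\mu_y,\rho_y),\qquad
(\lambda_2,\eta_2,\rho_2)=(d,\mu_d,\rho_d).
\]
The prescribed gluing sends $\lambda_i\mapsto\alpha_i$, $\rho_i\mapsto\beta_i$ and $\eta_i\mapsto\mu_i$, which is exactly the pattern in the lemma. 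Lemma~\ref{lem:T3gluing} realizes it by orientation-reversing diffeomorphisms, after possibly inverting one class. This inversion changes neither null-homotopy nor normal closures.

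The only hypothesis left is $\pi_1(Y_K)/\nn y,d\NN=1$, and this is $(*)$. Hence $\pi_1(Z_K(C_1,C_2))=1$. The point to stress is that Lemma~\ref{lem:replacement} needs no information about $\eta_i=\mu_y,\mu_d$ in $\pi_1(Y_K)$. Those classes die because the whole central group dies, and then normal meridional generation (Lemma~\ref{lem:normal-meridian}) kills the possibly nonabelian groups $G_i$.

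Next, additivity of Euler characteristic along $\mathbb T^3$ boundaries (with $e(\mathbb T^3)=0$) gives
\[
e=e(Y_K)-2e(\mathbb T^2\times\mathbb D^2)+2e(C_i)=0-0+4=4.
\]
Novikov additivity, with $\sigma(Y_K)=0$, $\sigma(\mathbb T^2\times\mathbb D^2)=0$ and $\sigma(C_i)=0$ from Section~\ref{sec:general}, gives $\sigma=0$. Simple connectivity then forces $b_1=b_3=0$, so $b_2=2$.

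For the intersection form, I take $\Sigma_2$ and a parallel fiber $F'$ as in \cite{AkhPre}. The rim tori $R_y,R_d$ lie in a collar of $\partial\nu\Sigma_2$, since they are products with $\mu_{\Sigma_2}$ at different collar levels. So $\Sigma_2$ itself, and a fiber $F'$ meeting $\Sigma_2$ transversely in one point, can be isotoped off $\nu R_y\cup\nu R_d$. I expect this disjointness to be the main geometric point to check. I would argue it by pushing $F'$ so that it meets the collar only in a normal disk of $\Sigma_2$ at a level away from the two rim-torus levels, or equivalently by choosing the rim tori on the boundary of a tubular neighborhood avoiding the point $F'\cap\Sigma_2$. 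Granting this, the surfaces survive unchanged in $Z_K(C_1,C_2)$ with $\Sigma_2^2=(F')^2=0$ and $\Sigma_2\cdot F'=1$.

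Their classes span a sublattice with Gram matrix $\begin{pmatrix}0&1\\1&0\end{pmatrix}$ of determinant $-1$. It has rank $2=b_2$ and is unimodular, so by Poincaré duality unimodularity of the ambient form it has index one, and $Q\cong H$. Freedman's classification of closed simply connected topological $4$-manifolds then gives $Z_K(C_1,C_2)\approx\mathbb S^2\times\mathbb S^2$, since $H$ is even and determines the Kirby–Siebenmann invariant.
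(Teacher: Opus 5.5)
Your proposal is correct and is essentially the paper's proof. The paper runs the argument of Lemma~\ref{lem:replacement} inline: the null slopes impose $y=\rho_y=d=\rho_d=1$, then $(*)$ kills the central group, and then $\mu_1=\mu_2=1$ kills both exterior groups by normal meridional generation. It then gives the same Euler characteristic and Novikov count and the same rank-two unimodular sublattice argument for $\Sigma_2$ and $F'$ placed away from the replacement regions.

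One small correction to your disjointness remark: a normal disk of $\Sigma_2$ crosses every collar level, so your first justification does not work as stated. Your second one is the right one, namely choosing the point $F'\cap\Sigma_2$ off the curves $y$ and $d$.
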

\begin{proof}
Let $W=Y_K\setminus\Int(\nu R_y\cup\nu R_d)$ and set $A=\pi_1(W)$.  Before any simplification, van Kampen gives the closed group as the quotient of
\[
 A*\pi_1(C_1)*\pi_1(C_2)
\]
by the six boundary identifications
\[
 y=\alpha_1,\quad \rho_y=\beta_1,\quad \mu_y=\mu_1,
 \qquad
 d=\alpha_2,\quad \rho_d=\beta_2,\quad \mu_d=\mu_2.
\]
Since $\alpha_i=\beta_i=1$, this first imposes
$y=\rho_y=d=\rho_d=1$.  Filling the two deleted rim-torus neighborhoods identifies
$A/\nn\rho_y,\rho_d\NN$ with $\pi_1(Y_K)$; hence the image of $A$ after these four relations is a quotient of
\[
 \pi_1(Y_K)/\nn y,d\NN=1.
\]
Thus the entire central group dies.  In particular $\mu_y=\mu_d=1$, and the remaining boundary relations give $\mu_1=\mu_2=1$.  Each $\mu_i$ normally generates $\pi_1(C_i)$, so both exterior groups then die.  Therefore
\[
 \pi_1(Z_K(C_1,C_2))=1.
\]
For nonabelian exterior groups, the meridians \(\mu_i\) are killed only after the central group has become trivial; normal meridional generation then kills the exterior groups.  Every torus exterior in $\mathbb S^4$ has $e=2$ and $\sigma=0$, hence replacing two $\mathbb T^2\times\mathbb D^2$ neighborhoods changes the Euler characteristic of $Y_K$ from $0$ to $4$ and leaves the signature zero.  The representatives $\Sigma_2$ and $F'$ are chosen away from the replacement regions exactly as in \cite{AkhPre}, so they survive with hyperbolic intersection matrix.  Since simple connectivity and $e=4$ imply $b_2=2$, they form an integral basis.  Freedman's classification of simply connected topological $4$-manifolds gives the homeomorphism type \cite{Freedman}.
\end{proof}

\begin{corollary}\label{cor:KK-S2S2}
For every $k_1,k_2\ge2$,
\[
 Z_K(C_{F_{k_1}},C_{F_{k_2}})\approx \mathbb S^2\times\mathbb S^2,
\]
and the two inserted exteriors have nonabelian groups $G_{k_i}\cong G_{k_i}'\rtimes\mathbb Z$ whose commutator subgroups have orders $k_1^3$ and $k_2^3$, respectively.  These exterior groups are not trivial; they are killed by the boundary relations in the closed-manifold van Kampen calculation.
\end{corollary}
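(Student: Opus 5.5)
The plan is to combine Theorem~\ref{thm:S2S2} with Theorem~\ref{thm:KK-family}. The statement has three parts: the homeomorphism type of $Z_K(C_{F_{k_1}},C_{F_{k_2}})$, the structure $G_{k_i}\cong G_{k_i}'\rtimes\mathbb Z$ with $|G_{k_i}'|=k_i^3$ and $G_{k_i}$ nonabelian, and the remark that the exterior groups die only in the closed-manifold van Kampen calculation.

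For the homeomorphism type, Theorem~\ref{thm:KK-family} shows that each $C_{F_{k_i}}$ carries a doubly-null marking $(\mu_{k_i},\alpha_{k_i},\beta_{k_i})$. The one point to check is orientation. Lemma~\ref{lem:T3gluing} supplies an orientation-reversing boundary diffeomorphism realizing the prescribed correspondence $y\mapsto\alpha_1$, $\rho_y\mapsto\beta_1$, $\mu_y\mapsto\mu_1$ (and similarly for $d$), after possibly inverting one basis element. An inversion does not affect null-homotopy or normal closures, so the marking stays doubly-null. Theorem~\ref{thm:S2S2} then applies verbatim and gives $\pi_1=1$, $Q\cong H$, and the homeomorphism to $\mathbb S^2\times\mathbb S^2$.

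For the group structure, $H_1(C_{F_k};\mathbb Z)\cong\mathbb Z$ by Alexander duality, so there is an exact sequence
\[
1\longrightarrow G_k'\longrightarrow G_k\longrightarrow\mathbb Z\longrightarrow 1.
\]
Since $\mathbb Z$ is free, this sequence splits, and the splitting is given by sending $1$ to the meridian $\mu_k$, whose class generates $H_1$. Hence $G_k\cong G_k'\rtimes\langle\mu_k\rangle$. Theorem~\ref{thm:KK-family}(ii),(iii) gives $|G_k'|=k^3$ and shows that $G_k$ is nonabelian, in particular nontrivial.

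The final sentence of the statement restates the ordering in the proof of Theorem~\ref{thm:S2S2}. There the meridians $\mu_i$ are set to $1$ only through the relations $\mu_y=\mu_1$ and $\mu_d=\mu_2$, and only after the central group has collapsed. Lemma~\ref{lem:normal-meridian} then kills $G_{k_i}$ inside the pushout, while $G_{k_i}$ itself is nontrivial.

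The only step that needs care is the orientation bookkeeping in the gluing, and Lemma~\ref{lem:T3gluing} already handles it. Everything else is direct citation.
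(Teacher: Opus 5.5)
Your proposal is correct and follows the paper's reasoning: the paper gives no separate proof, but obtains the corollary by feeding the doubly-null markings of Theorem~\ref{thm:KK-family} into Theorem~\ref{thm:S2S2}, with the splitting $G_k\cong G_k'\rtimes\mathbb Z$ coming from the meridian section of the abelianization, as recorded in its later semidirect-product discussion. Your extra orientation check via Lemma~\ref{lem:T3gluing} is correct bookkeeping that the paper leaves implicit.
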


\subsection{The doubly-null subfamily: the $\#_3(\mathbb S^2\times\mathbb S^2)$ target}
The identity-double argument below uses two null boundary slopes and is therefore stated only for doubly-null exteriors.  In particular, the explicit type-zero subfamily of Theorem~\ref{thm:KK-family} applies.  We do not claim here that the same $3H$ construction works for every nonzero cyclic type in the full Kanenobu-Kazama family.

Let
\[
Y_K^\circ=Y_K\setminus\operatorname{int}\nu\Sigma_2
\]
and let $X_K^{\mathrm{id}}$ be the identity double of two copies of $Y_K^\circ$, with the identity on the $\Sigma_2$ factor and reversal of the normal circle.  As in \cite{AkhPre}, choose the two disjoint rim tori $R_y,R_d$ in one side of the gluing neck and place them at distinct collar levels.  Their boundary bases are again
\[
(y,\mu_y,\rho_y),\qquad (d,\mu_d,\rho_d).
\]
Replacing $R_y,R_d$ by doubly-null marked exteriors $C_1,C_2$ with the same boundary assignments as above gives a closed manifold denoted
\[
X_K^{\mathrm{id}}(C_1,C_2).
\]

\begin{theorem}\label{thm:3H}
For arbitrary doubly-null marked torus exteriors $C_1,C_2$,
\[
\pi_1\bigl(X_K^{\mathrm{id}}(C_1,C_2)\bigr)=1,
\qquad e\bigl(X_K^{\mathrm{id}}(C_1,C_2)\bigr)=8,
\qquad \sigma\bigl(X_K^{\mathrm{id}}(C_1,C_2)\bigr)=0.
\]
Moreover
\[
Q_{X_K^{\mathrm{id}}(C_1,C_2)}\cong 3H,
\]
and therefore
\[
X_K^{\mathrm{id}}(C_1,C_2)\approx \#_3(\mathbb S^2\times\mathbb S^2).
\]
\end{theorem}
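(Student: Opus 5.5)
The plan is to follow the pattern of Theorem~\ref{thm:S2S2}, with $Y_K$ replaced by the identity double $X_K^{\mathrm{id}}$, and to establish three things in order: simple connectivity, the Euler characteristic and signature, and the intersection form.

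\emph{Fundamental group.} First I would compute $\pi_1(X_K^{\mathrm{id}})$ by van Kampen along the neck $\Sigma_2\times\mathbb S^1$. Write $A_1,A_2$ for the two copies of $\pi_1(Y_K^\circ)$. The identity gluing identifies the $\Sigma_2$-classes of the two copies, and it identifies the normal circle $\mu_{\Sigma_2}$ with its inverse. The meridian of $\Sigma_2$ is $[x,b]$, and $[x,b]=1$ already holds in $\pi_1(Y_K)$. I expect this relation to remain valid in $Y_K^\circ$, using the extra generators recorded in \cite{AkhPre}. If so, the double is the amalgam $A_1*_{\pi_1(\Sigma_2\times\mathbb S^1)}A_2$.

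The rim tori $R_y,R_d$ sit in one collar, say on side~1. Set $W=X_K^{\mathrm{id}}\setminus\Int(\nu R_y\cup\nu R_d)$. As in the proof of Lemma~\ref{lem:replacement}, the doubly-null relations first impose $y=d=\rho_y=\rho_d=1$. The relations $\rho=1$ refill the rim tori, so the image of $\pi_1(W)$ factors through $\pi_1(X_K^{\mathrm{id}})/\nn y,d\NN$.

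I would then argue that this quotient is trivial. On side~1, the relation $(*)$ kills $A_1$, because $A_1$ surjects onto $\pi_1(Y_K)$ with kernel normally generated by the meridian, and that meridian dies. In particular every $\Sigma_2$-class and the normal circle die. Through the identity identification, the corresponding classes in $A_2$ die as well.

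The main obstacle is to show that $A_2$ is generated, modulo these classes, by the images of $\pi_1(\Sigma_2\times\mathbb S^1)$. I would try to check from the presentation that $d$ and $y$ themselves are represented in $\Sigma_2$; this is the sense in which rim tori are $y\times\mu_{\Sigma_2}$ and $d\times\mu_{\Sigma_2}$. Then $(*)$ applied on side~2 kills $A_2$ too. This is the step I would verify most carefully against \cite{AkhPre}.

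Once the central group is trivial, $\mu_y=\mu_d=1$. The boundary relations then give $\mu_1=\mu_2=1$, and Lemma~\ref{lem:normal-meridian} kills both exterior groups.

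\emph{Euler characteristic and signature.} Since $e(\Sigma_2\times\mathbb S^1)=0$, we get $e(X_K^{\mathrm{id}})=2e(Y_K^\circ)=2\bigl(e(Y_K)-e(\Sigma_2)\bigr)=2(0+2)=4$. Each replacement of a $\mathbb T^2\times\mathbb D^2$ by a torus exterior adds $2$, so $e=8$. For the signature, Novikov additivity gives $\sigma=0$: the double is the boundary-union of $Y_K^\circ$ with a copy of itself, and orientation reversal forces $\sigma=0$. Each exterior has $\sigma=0$. Hence $b_2=6$.

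\emph{Intersection form.} I would exhibit three disjoint hyperbolic pairs chosen away from the rim tori, as in \cite{AkhPre}. The first two are the torus fiber $F'$ in each copy of $Y_K^\circ$, paired with a genus-two surface: the doubled $\Sigma_2$-section, or a surface built from the two punctured halves. The third pair is the neck torus $\gamma\times\mu_{\Sigma_2}$, paired with a surface that crosses the neck. The resulting intersection matrix is $3H$, which has determinant $-1$. By the index argument used in Theorem~\ref{thm:fullKK-S2S2}, these six classes form an integral basis. The form is even, so Freedman's theorem \cite{Freedman} gives $X_K^{\mathrm{id}}(C_1,C_2)\approx\#_3(\mathbb S^2\times\mathbb S^2)$.
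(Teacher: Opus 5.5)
Your $\pi_1$ and $e,\sigma$ steps follow the paper's route: kill $d,y$ on side~1, use that $d,y$ are surface generators so that $d'=y'=1$ on side~2, then kill the exterior groups through their meridians. Two repairs are needed.

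\begin{itemize}
\item The step ``that meridian dies'' does not follow from $(*)$. Knowing $A_1/\nn d,y,[x,b]\NN=1$ only shows that $A_1/\nn d,y\NN$ is normally generated by a commutator, so it is perfect, not necessarily trivial. What the paper uses is that $aba=bab$, $dxd^{-1}=xb$ and $dbd^{-1}=x^{-1}$ already hold in $\pi_1(Y_K^\circ)$. Then $d=1$ forces $b=x=a=1$, hence $[x,b]=1$, and the extra generators $g_i\in\nn[x,b]\NN$ of \cite{AkhPre} die. You do not need $[x,b]=1$ in $\pi_1(Y_K^\circ)$, and the paper does not assume it; van Kampen gives the pushout regardless.
\item The identity double glues $Y_K^\circ$ to itself by an orientation-reversing boundary map. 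Novikov additivity then gives $\sigma=2\sigma(Y_K^\circ)=2\sigma(Y_K)=0$; it is not ``orientation reversal'' that forces $\sigma=0$.
\end{itemize}

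The genuine gap is the intersection form: the three hyperbolic pairs you propose do not exist as described.

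A torus fiber $F'$ meets $\Sigma_2$ once, so in $Y_K^\circ$ it is a punctured torus, not a closed surface. Neither half contains a closed fiber class. The two punctured fibers glue across the neck to a single genus-two surface $T$, which pairs with a parallel copy $S$ of $\Sigma_2$. That is one hyperbolic pair, not two.

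The remaining rank four comes from the kernel of
\[
H_1(\Sigma_2)\to H_1(Y_K^\circ)\cong\mathbb Z\langle d\rangle\oplus\mathbb Z\langle y\rangle .
\]
This kernel is spanned by the symplectic pair $\gamma_1=a^{-1}b$, $\gamma_2=b^{-1}aba^{-1}$. You therefore need both rim tori $R_i=\gamma_i\times\mu_{\Sigma_2}$ and both doubled relative surfaces $V_i$, obtained by doubling $P_i$ with $\partial P_i=\gamma_i$. The duality is crossed: $R_1\cdot V_2=1$, $R_2\cdot V_1=-1$, and $R_i\cdot V_i=0$.

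Even with these six classes the Gram matrix is not literally $3H$, because $V_i^2$ and $V_1\cdot V_2=n$ are not controlled. The missing idea is that $V_i^2=2m_i$ is even, since $V_i$ is a double whose two halves have matching boundary framings. This makes the lattice spanned by $S,T,R_1,R_2,V_1,V_2$ even. It also allows the change of basis
\[
B_1=V_2-m_2R_1+nR_2,\qquad B_2=-V_1-m_1R_2,
\]
which gives $2H$.

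Your sentence ``the form is even'' is asserted with no such justification. Parity is exactly what separates $\#_3(\mathbb S^2\times\mathbb S^2)$ from $\#_3(\mathbb{CP}^2\#\overline{\mathbb{CP}}{}^{\,2})$; compare Remark~\ref{rem:swap-parity}, where the paper cannot decide it. Once the six even-square classes are in hand, your index argument goes through and finishes the proof.
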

\begin{proof}
Write
\[
G=\pi_1(Y_K^\circ).
\]
The identity double is the amalgam of two copies of $G$ over
\(\pi_1(\Sigma_2\times\mathbb S^1)\).  We record the part of the complement
calculation from \cite[Lemmas~4.6-4.7]{AkhPre} that is needed here.  In the
first copy the four standard surface generators map to
\[
a^{-1}b,\qquad b^{-1}aba^{-1},\qquad d,\qquad y,
\]
and a meridian of \(\Sigma_2\) is represented by \([x,b]\).  The complement
presentation contains
\[
aba=bab,\qquad dxd^{-1}=xb,\qquad dbd^{-1}=x^{-1}.
\]
Its additional generators \(g_i\) lie in the normal subgroup generated by
\([x,b]\); the remaining extra relator becomes \([x,a]=1\) after those
generators are killed.  The second copy has the corresponding primed
generators.

Now quotient by the normal closure of \(d\) and \(y\).  The product gluing
also gives \(d'=y'=1\).  In the first copy, setting \(d=1\) in
\(dxd^{-1}=xb\) gives \(b=1\).  The relation \(dbd^{-1}=x^{-1}\) then gives
\(x=1\), and the braid relation gives \(a=1\).  Hence \([x,b]=1\), so all
the additional generators \(g_i\) vanish as well.  The same argument applies
to the primed copy.  Thus each vertex group becomes trivial after imposing
\(d=y=1\).

Now perform the two torus-exterior replacements.  Since the two null boundary slopes of $C_1,C_2$ impose $y=\rho_y=d=\rho_d=1$, the image of the central identity-double group is trivial by the preceding calculation.  Consequently the classes $\mu_y,\mu_d$ are trivial.  They are identified with the geometric meridians $\mu_1,\mu_2$ of the inserted exteriors, so $\mu_1=\mu_2=1$.  Each meridian normally generates its exterior group, and hence both nonabelian exterior groups are killed.  Therefore
\[
\pi_1\bigl(X_K^{\mathrm{id}}(C_1,C_2)\bigr)=1.
\]

The identity double before replacement has Euler characteristic $4$ and signature $0$; see \cite{AkhPre}.  Removing two copies of $\mathbb T^2\times\mathbb D^2$ changes neither invariant, and each inserted torus exterior has Euler characteristic $2$ and signature $0$.  Hence the final manifold has $e=8$ and $\sigma=0$.

We spell out the intersection-form argument because it is independent of the nonabelian group calculation and because the location of the replacement tori matters.  Put
\[
\gamma_1=a^{-1}b,\qquad \gamma_2=b^{-1}aba^{-1}.
\]
With the standard genus-two generators of
\cite[Lemma~4.6]{AkhPre}, the ordered curves
\[
(\gamma_1,\gamma_2,d,y)
\]
form a symplectic basis of \(H_1(\Sigma_2;\mathbb Z)\), with
\[
\gamma_1\cdot\gamma_2=1,\qquad d\cdot y=1,
\]
and all other pairings zero.  For clarity, the kernel used below can be
read directly from the abelianization of the complement presentation.
In additive notation the braid relation gives \(a=b\); the relation
\(dxd^{-1}=xb\) then gives \(b=0\), hence \(a=0\), and
\(dbd^{-1}=x^{-1}\) gives \(x=0\).  The meridian \([x,b]\) and the
additional generators from \cite[Lemma~4.7]{AkhPre} vanish in the
abelianization.  No relation remains on \(d\) or \(y\).  Therefore
\[
H_1(Y_K^\circ;\mathbb Z)
   \cong \mathbb Z\langle d\rangle\oplus\mathbb Z\langle y\rangle .
\]
Under the inclusion of the boundary surface, \(\gamma_1,\gamma_2\) map
to zero while \(d,y\) map to these two generators.  Consequently
\[
\ker\{H_1(\Sigma_2;\mathbb Z)\longrightarrow
H_1(Y_K^\circ;\mathbb Z)\}
=\mathbb Z\langle\gamma_1,\gamma_2\rangle.
\]
For $i=1,2$, let $R_i=\gamma_i\times\mu_{\Sigma_2}$ be the corresponding rim torus in the neck, choosing distinct collar levels.  Since $\gamma_i$ is null-homologous in $Y_K^\circ$, choose a properly embedded oriented surface $P_i\subset Y_K^\circ$ with boundary $\gamma_i$.  Doubling \(P_i\) across the identity gluing gives a closed surface \(V_i\) in \(X_K^{\mathrm{id}}\).  With the orientations chosen so that
\[
\gamma_1\cdot\gamma_2=1,
\]
the neck calculation gives
\[
R_1^2=R_2^2=R_1\cdot R_2=0,\qquad
R_1\cdot V_2=1,\qquad
R_2\cdot V_1=-1,
\]
and
\[
R_1\cdot V_1=R_2\cdot V_2=0.
\]
The self-intersection of a doubled relative surface is twice its relative normal Euler number.  To see the parity directly, use the product framing along the common boundary curve in the gluing neck.  The two copies of $P_i$ have the same relative normal Euler number with respect to this framing, and the identity gluing identifies their boundary framings, so the two relative Euler numbers add in the double.  Hence
\[
V_1^2=2m_1,\qquad V_2^2=2m_2
\]
for some \(m_1,m_2\in\mathbb Z\).  Write
\[
V_1\cdot V_2=n.
\]
Define
\[
B_1=V_2-m_2R_1+nR_2,\qquad
B_2=-V_1-m_1R_2.
\]
A direct calculation gives
\[
R_i\cdot B_j=\delta_{ij},\qquad
B_1^2=B_2^2=B_1\cdot B_2=0.
\]
Thus
\[
\langle R_1,B_1,R_2,B_2\rangle\cong2H.
\]

There is a further hyperbolic pair.  Let $S$ be a parallel copy of the gluing surface $\Sigma_2$.  In each half choose a parallel torus fiber $F_i'$ meeting $\Sigma_2$ once.  Removing a small disk at that intersection and gluing the two punctured tori across the neck gives an embedded genus-two surface
\[
T=(F_1'\setminus \mathbb D^2)\cup(F_2'\setminus \mathbb D^2)
\]
with
\[
S^2=T^2=0,\qquad S\cdot T=1.
\]
Hence \(\langle S,T\rangle\cong H\).  We now verify that this
lattice is unaffected by the two replacements.  Choose the intersection
points \(F_i'\cap\Sigma_2\) away from the four curves
\(\gamma_1,\gamma_2,d,y\).  Place \(R_1,R_2\) and the neck portions of
their dual surfaces at collar levels different from those used for
\[
R_d=d\times\mu_{\Sigma_2},\qquad
R_y=y\times\mu_{\Sigma_2}.
\]
The symplectic basis may be represented so that
\(\gamma_1,\gamma_2\) are disjoint from \(d,y\).  The relative surfaces
\(P_i\) can therefore be chosen with product collars disjoint from
\(\nu R_d\cup\nu R_y\); tubing them to parallel rim tori in the
construction of \(B_i\) can also be carried out at the chosen disjoint
neck levels.  The surfaces \(S\) and \(T\) are chosen using the
intersection points above and are likewise disjoint from the replacement
regions.  Thus the representatives \(S,T,R_i,B_i\) all lie in the
complement of \(\nu R_d\cup\nu R_y\).  This is the geometric
disjointness used in \cite[Proposition~4.9]{AkhPre}.  Therefore the full
lattice
\[
H\oplus2H=3H
\]
survives both replacements.

For completeness, the numerical rank agrees with this lattice.  The unreplaced identity double has $e=4$ and $\sigma=0$; each torus-exterior replacement raises $e$ by $2$ and leaves the signature unchanged, so the final manifold has $e=8$ and $\sigma=0$.  Since it is simply connected, $b_1=b_3=0$ and therefore
\[
b_2=e-2=6.
\]
The displayed $3H$ is a unimodular rank-six sublattice of the full intersection lattice.  Since both lattices have rank six, it has finite index, say $q$.  For a full-rank sublattice the determinants satisfy
\[
|\det Q_{3H}|=q^2|\det Q_X|.
\]
Poincare duality makes the ambient intersection form unimodular, while $3H$ is also unimodular, so both determinants have absolute value one.  Hence $q=1$, and the displayed $3H$ is the whole intersection form.  Since $3H$ is even and the manifold is smooth, Freedman's classification gives \cite{Freedman}
\[
X_K^{\mathrm{id}}(C_1,C_2)\approx\#_3(\mathbb S^2\times\mathbb S^2).
\]
\end{proof}

\begin{corollary}\label{cor:KK-3H}
For every $k_1,k_2\ge2$,
\[
X_K^{\mathrm{id}}(C_{F_{k_1}},C_{F_{k_2}})\approx \#_3(\mathbb S^2\times\mathbb S^2).
\]
The two inserted torus exteriors have nonabelian fundamental groups whose commutator subgroups have orders $k_1^3$ and $k_2^3$, respectively, but both groups are killed in the closed-manifold van Kampen calculation above.
\end{corollary}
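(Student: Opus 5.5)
The corollary follows by specializing Theorem~\ref{thm:3H} to the type-zero Kanenobu--Kazama exteriors. The plan has three steps.

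\emph{Step 1: the exteriors are admissible.} For each $k_i\ge2$, Theorem~\ref{thm:KK-family} gives a torus $F_{k_i}\subset\mathbb S^4$ whose peripheral image is exactly $\mathbb Z\langle\mu_{k_i}\rangle$. Lemma~\ref{lem:cyclic-peripheral-kernel} then gives a primitive basis $(\mu_{k_i},\alpha_{k_i},\beta_{k_i})$ in which $\alpha_{k_i}$ and $\beta_{k_i}$ are null-homotopic. Hence $(C_{F_{k_i}};\mu_{k_i},\alpha_{k_i},\beta_{k_i})$ is a doubly-null marked torus exterior in the sense of Definition~\ref{def:DN}. By Lemma~\ref{lem:KK-square-zero}, each boundary is a genuine $\mathbb T^3$. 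Lemma~\ref{lem:T3gluing} supplies orientation-reversing boundary diffeomorphisms realizing the assignments $y\mapsto\alpha_1$, $\rho_y\mapsto\beta_1$, $\mu_y\mapsto\mu_1$, and similarly for $d$; this may require inverting one basis element, which does not affect nullity.

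\emph{Step 2: the homeomorphism type.} Theorem~\ref{thm:3H} is stated for arbitrary doubly-null marked exteriors, so applying it directly gives $X_K^{\mathrm{id}}(C_{F_{k_1}},C_{F_{k_2}})\approx\#_3(\mathbb S^2\times\mathbb S^2)$.

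\emph{Step 3: the group-theoretic assertions.} Theorem~\ref{thm:KK-family}(ii),(iii) show that each $G_{k_i}$ is nonabelian with $|G_{k_i}'|=k_i^3$. The statement that both groups are killed is exactly the ordered van Kampen argument in the proof of Theorem~\ref{thm:3H}. First the null slopes impose $y=d=\rho_y=\rho_d=1$. This collapses the central identity-double group, which forces $\mu_y=\mu_d=1$ and therefore $\mu_1=\mu_2=1$. Lemma~\ref{lem:normal-meridian} then kills each $G_{k_i}$.

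I expect no serious obstacle. The only point that needs care is that the hypotheses of Theorem~\ref{thm:3H} are met exactly, namely that the marking is doubly null rather than merely one-null; Theorem~\ref{thm:KK-family} guarantees this through the type-zero choice of $m$.
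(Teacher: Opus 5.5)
Your proposal is correct and follows the paper's route: the corollary is Theorem~\ref{thm:3H} applied to the type-zero doubly-null exteriors $C_{F_{k_i}}$ of Theorem~\ref{thm:KK-family}. The order-$k_i^3$ and nonabelian assertions come from Theorem~\ref{thm:KK-family}, and the killing of the exterior groups is read off from the ordered van Kampen argument in the proof of Theorem~\ref{thm:3H}. Your extra remarks on Lemma~\ref{lem:KK-square-zero} and Lemma~\ref{lem:T3gluing} are harmless bookkeeping that the paper leaves implicit.
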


\section{Swap gluings of one-null torus exteriors}\label{sec:swap}

The constructions above attach torus exteriors to larger four-manifold
blocks. We now consider a direct gluing of two torus exteriors. The effect
on the fundamental group depends on the peripheral marking. A
meridian-null swap kills both exterior groups, whereas a
meridian-preserving gluing of doubly-null exteriors retains them as an
amalgam over the meridian subgroup.

\begin{definition}\label{def:one-null-marked}
A \emph{one-null marked torus exterior} is a torus exterior \(C_T\) together
with a primitive boundary basis
\[
(\mu,u,v)
\]
such that \(\mu\) is the geometric meridian and
\[
v=1\qquad\text{in }\pi_1(C_T).
\]
No condition is imposed on the image of \(u\).
\end{definition}

Let
\[
(C_i;\mu_i,u_i,v_i),\qquad i=1,2,
\]
be one-null marked torus exteriors. Define
\[
\phi_{\mathrm{sw}}:\partial C_1\longrightarrow\partial C_2
\]
on the marked bases by
\[
\mu_1\longmapsto v_2,\qquad
u_1\longmapsto u_2,\qquad
v_1\longmapsto\mu_2.
\]
The corresponding matrix is
\[
A_{\mathrm{sw}}=
\begin{pmatrix}
0&0&1\\
0&1&0\\
1&0&0
\end{pmatrix},
\qquad
\det A_{\mathrm{sw}}=-1.
\]
Thus Lemma~\ref{lem:T3gluing} realizes this assignment by an
orientation-reversing diffeomorphism. Put
\[
X_{\mathrm{sw}}(C_1,C_2)
=
C_1\cup_{\phi_{\mathrm{sw}}}C_2.
\]

\begin{theorem}\label{thm:one-null-swap}
For any two one-null marked torus exteriors,
\[
\pi_1\bigl(X_{\mathrm{sw}}(C_1,C_2)\bigr)=1,
\qquad
e\bigl(X_{\mathrm{sw}}(C_1,C_2)\bigr)=4,
\qquad
\sigma\bigl(X_{\mathrm{sw}}(C_1,C_2)\bigr)=0.
\]
Consequently
\[
H_1(X_{\mathrm{sw}};\mathbb Z)
=
H_3(X_{\mathrm{sw}};\mathbb Z)=0,
\qquad
H_2(X_{\mathrm{sw}};\mathbb Z)\cong\mathbb Z^2.
\]
Its intersection form is either
\[
H
\qquad\text{or}\qquad
\langle1\rangle\oplus\langle-1\rangle.
\]
Accordingly,
\[
X_{\mathrm{sw}}(C_1,C_2)
\approx
\mathbb S^2\times\mathbb S^2
\quad\text{or}\quad
\mathbb{CP}^2\#\overline{\mathbb{CP}}{}^{\,2}.
\]
\end{theorem}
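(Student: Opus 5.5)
The proof splits into three parts: the fundamental group by van Kampen, then the Euler characteristic and signature, then homology, the intersection form and the homeomorphism type.

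\emph{Fundamental group.} Write \(G_i=\pi_1(C_i)\) and let \(\bar\mu_i,\bar u_i,\bar v_i\) be the images of the marked boundary classes in \(G_i\). Seifert--van Kampen along \(\partial C_1\cong\mathbb T^3\) gives
\[
\pi_1(X_{\mathrm{sw}})\cong (G_1*G_2)/\nn \bar\mu_1\bar v_2^{-1},\ \bar u_1\bar u_2^{-1},\ \bar v_1\bar\mu_2^{-1}\NN .
\]
I would collapse this in the following order.
\begin{enumerate}[label=(\arabic*)]
\item The one-null marking gives \(\bar v_2=1\) in \(G_2\), so the first relation forces \(\bar\mu_1=1\).
\item By Lemma~\ref{lem:normal-meridian}, \(\bar\mu_1\) normally generates \(G_1\), so the whole image of \(G_1\) dies.
\item Symmetrically, \(\bar v_1=1\) in \(G_1\), so the third relation gives \(\bar\mu_2=1\), and Lemma~\ref{lem:normal-meridian} kills \(G_2\).
\end{enumerate}
The relation \(\bar u_1=\bar u_2\) is then automatic, and the group is trivial. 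No assumption on \(u_i\) is used. The ordering is harmless here: each killing uses only a null slope on the opposite side, so no circularity arises.

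\emph{Euler characteristic and signature.} Each torus exterior has \(e=2\) and \(\sigma=0\), as recorded at the start of Section~\ref{sec:general}, and \(e(\mathbb T^3)=0\). Hence \(e(X_{\mathrm{sw}})=2+2-0=4\). Novikov additivity gives \(\sigma=0\).

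\emph{Homology and intersection form.} Simple connectivity gives \(H_1=0\). Poincaré duality together with universal coefficients gives \(H_3\cong H^1=0\), and \(H_2\) is free. Then \(b_2=e-2=2\), so \(H_2\cong\mathbb Z^2\). The intersection form is unimodular of rank two and signature zero, hence indefinite. By the classification of indefinite unimodular forms it is \(H\) if even and \(\langle1\rangle\oplus\langle-1\rangle\) if odd. Freedman's theorem then gives the two homeomorphism types. In the odd case the Kirby--Siebenmann invariant is zero because the manifold is smooth, so the fake \(\mathbb{CP}^2\#\overline{\mathbb{CP}}{}^{\,2}\) is excluded.

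\emph{Main obstacle.} The only subtle point is the parity of the form, which the statement deliberately leaves open. I would not try to decide it. I would only make sure that smoothness is invoked to exclude the nonsmoothable representative in the odd case.
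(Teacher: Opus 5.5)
Your proposal is correct and follows the paper's proof essentially step for step. It uses the same van Kampen collapse, in which each null slope $v_i$ kills the opposite meridian and Lemma~\ref{lem:normal-meridian} then kills both exterior groups, followed by additivity of $e$, Novikov additivity, $b_2=2$, the rank-two indefinite unimodular classification, and Freedman's theorem with smoothness forcing the Kirby--Siebenmann invariant to vanish.
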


\begin{proof}
Write \(G_i=\pi_1(C_i)\). Seifert-van Kampen gives
\[
\pi_1(X_{\mathrm{sw}})
\cong
\frac{G_1*G_2}
{\left\langle\!\left\langle
\mu_1=v_2,\;
u_1=u_2,\;
v_1=\mu_2
\right\rangle\!\right\rangle}.
\]
Since \(v_1=1\) in \(G_1\) and \(v_2=1\) in \(G_2\), the first and third
relations imply
\[
\mu_1=\mu_2=1.
\]
By Lemma~\ref{lem:normal-meridian}, each \(\mu_i\) normally generates
\(G_i\). Hence both factors \(G_1,G_2\) become trivial. The middle
relation then has no surviving content, and
\[
\pi_1(X_{\mathrm{sw}})=1.
\]

For a torus exterior in \(\mathbb S^4\), Section~\ref{sec:general} gives
\[
e(C_i)=2,\qquad \sigma(C_i)=0.
\]
Since \(e(\mathbb T^3)=0\),
\[
e(X_{\mathrm{sw}})
=e(C_1)+e(C_2)-e(\mathbb T^3)=4.
\]
Novikov additivity gives
\[
\sigma(X_{\mathrm{sw}})=0.
\]
The manifold is closed and simply connected, so \(b_1=b_3=0\). Therefore
\[
4=e(X_{\mathrm{sw}})=2+b_2,
\]
and \(b_2=2\). Poincare duality and the universal coefficient theorem give
a unimodular integral intersection form of rank two and signature zero.
Since the rank is two and the signature is zero, the form is indefinite.  The indefinite unimodular integral forms of rank two and signature zero are the even hyperbolic form
\[
H=
\begin{pmatrix}
0&1\\
1&0
\end{pmatrix}
\]
and the odd form
\[
\langle1\rangle\oplus\langle-1\rangle.
\]
The manifold is smooth, so its Kirby-Siebenmann invariant is zero.  Freedman's classification therefore identifies the even case with $\mathbb S^2\times\mathbb S^2$ and the odd case with $\mathbb{CP}^2\#\overline{\mathbb{CP}}{}^{\,2}$, giving the stated homeomorphism alternatives.
\end{proof}

\begin{remark}\label{rem:swap-parity}
Theorem~\ref{thm:one-null-swap} leaves open the parity of the intersection
form. Thus the swap construction gives a manifold homeomorphic to either
\[
\mathbb S^2\times\mathbb S^2
\quad\text{or}\quad
\mathbb{CP}^2\#\overline{\mathbb{CP}}{}^{\,2}.
\]
Handlebody descriptions of these constructions will be studied separately.
\end{remark}

\subsection{The full Kanenobu-Kazama family}

Lemma~\ref{lem:cyclic-type-one-null} supplies a one-null marking for every
torus in the full Kanenobu-Kazama \(1\)-handle family. Thus the swap
construction is not restricted to the type-zero subfamily.

\begin{corollary}\label{cor:fullKK-swap}
Let \(C_1,C_2\) be exteriors of arbitrary tori in the full
Kanenobu-Kazama \(1\)-handle family. There are primitive boundary markings
for which
\[
\pi_1\bigl(X_{\mathrm{sw}}(C_1,C_2)\bigr)=1,
\qquad e=4,\qquad \sigma=0.
\]
The resulting manifold is homeomorphic to either
\[
\mathbb S^2\times\mathbb S^2
\quad\text{or}\quad
\mathbb{CP}^2\#\overline{\mathbb{CP}}{}^{\,2}.
\]
\end{corollary}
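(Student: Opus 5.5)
The corollary should follow by combining Lemma~\ref{lem:cyclic-type-one-null} with Theorem~\ref{thm:one-null-swap}; the only work is to produce the markings and check that the swap map is realized by an orientation-reversing diffeomorphism.

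First, for each $i=1,2$, I would apply Lemma~\ref{lem:cyclic-type-one-null} to the torus $F_i$ in the Kanenobu-Kazama family. This gives a primitive basis $(\mu_i,u_i,v_i)$ of $\pi_1(\partial C_i)\cong\mathbb Z^3$ in which $\mu_i$ is the geometric meridian and $v_i=1$ in $\pi_1(C_i)$. That is exactly a one-null marking in the sense of Definition~\ref{def:one-null-marked}. The lemma works in every case of the family: finite type $p>0$, infinite type $p=0$, and $p=1$. No hypothesis on $\pi_1(C_i)$ is used, which matters because some members, such as $G(0,0,m)=\pi$, are not finite-by-$\mathbb Z$.

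Second, I would define $\phi_{\mathrm{sw}}$ on these markings. Its matrix $A_{\mathrm{sw}}$ has determinant $-1$, so Lemma~\ref{lem:T3gluing} realizes it by an orientation-reversing diffeomorphism of $\mathbb T^3$ with no sign change needed. Even if a sign change were forced, it would replace a basis element by its inverse. That does not affect whether the element is null-homotopic or whether it is a meridian up to orientation, so the one-null property survives.

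Third, I would invoke Theorem~\ref{thm:one-null-swap} directly. It gives $\pi_1=1$, $e=4$ and $\sigma=0$, and hence the homeomorphism dichotomy between $\mathbb S^2\times\mathbb S^2$ and $\mathbb{CP}^2\#\overline{\mathbb{CP}}{}^{\,2}$. The inputs that theorem needs for every torus exterior in $\mathbb S^4$ have already been established: Lemma~\ref{lem:normal-meridian} gives normal generation by the meridian, and Section~\ref{sec:general} gives $e(C_i)=2$ and $\sigma(C_i)=0$.

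I do not expect a real obstacle. The one point to check is that Lemma~\ref{lem:cyclic-type-one-null} really applies when the type factor is infinite cyclic, as for $G(0,0,m)$ with $m\neq0$. Its proof handles this through the split surjection $\mathbb Z^2\to\mathbb Z$, whose kernel supplies the primitive null slope $v$. It also relies on the peripheral subgroup splitting as $\mathbb Z\langle\mu\rangle\oplus\tau_F$ with $\mu$ of infinite order, which is what makes a complementary summand $L$ available.
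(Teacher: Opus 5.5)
Your proposal is correct and is essentially the paper's proof: take the one-null markings $(\mu_i,u_i,v_i)$ supplied by Lemma~\ref{lem:cyclic-type-one-null} for every member of the family, then apply Theorem~\ref{thm:one-null-swap}. Your added check is a harmless elaboration of what the paper already builds into the definition of $X_{\mathrm{sw}}$ through Lemma~\ref{lem:T3gluing}; the check is that the swap matrix has determinant $-1$, and that inverting a basis element, if orientation ever required it, preserves the one-null property.
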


\begin{proof}
Choose the primitive bases \((\mu_i,u_i,v_i)\) from
Lemma~\ref{lem:cyclic-type-one-null}. They satisfy \(v_i=1\) in the
corresponding exterior groups, so Theorem~\ref{thm:one-null-swap} applies.
\end{proof}

The complement groups used in this corollary need not be cyclic. For
example, the subfamily \(G(k,0,m)\), \(k\ge2\), satisfies
\[
|G'(k,0,m)|=k^3,
\]
and hence has nonabelian complement group. The swap kills these groups
because each meridian is identified with the null slope of the opposite
exterior. Thus the conclusion depends on the marked peripheral map, not on
abelianity of the complement group.

\subsection{Meridian-preserving gluings}

For doubly-null exteriors there is another natural boundary identification.
Let
\[
(C_i;\mu_i,\alpha_i,\beta_i),\qquad i=1,2,
\]
satisfy
\[
\alpha_i=\beta_i=1
\qquad\text{in }G_i=\pi_1(C_i).
\]
Choose the orientation-reversing map
\[
\mu_1\longmapsto\mu_2,\qquad
\alpha_1\longmapsto\alpha_2,\qquad
\beta_1\longmapsto\beta_2^{-1}.
\]
The induced matrix has determinant \(-1\).

\begin{proposition}\label{prop:meridian-amalgam}
For this meridian-preserving gluing,
\[
\pi_1(C_1\cup_\phi C_2)
\cong
G_1*_{\langle\mu\rangle}G_2,
\]
where the two infinite cyclic meridian subgroups are identified. In
particular, for the type-zero Kanenobu-Kazama exteriors,
\[
\pi_1(C_{F_{k_1}}\cup_\phi C_{F_{k_2}})
\cong
G_{k_1}*_{\mathbb Z}G_{k_2}.
\]
\end{proposition}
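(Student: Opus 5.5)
The plan is to compute $\pi_1(C_1\cup_\phi C_2)$ with Seifert--van Kampen along the common boundary $\mathbb T^3$ and then show that the resulting pushout is the stated amalgam. Both pieces are connected and the gluing torus is connected, so
\[
\pi_1(C_1\cup_\phi C_2)\cong
\frac{G_1*G_2}{\nn \mu_1\mu_2^{-1},\ \alpha_1\alpha_2^{-1},\ \beta_1\beta_2\NN},
\]
where each boundary class is read through its inclusion-induced image. The boundary group $\pi_1(\mathbb T^3)\cong\mathbb Z^3$ is generated by $\mu_1,\alpha_1,\beta_1$, so these three relations are enough.

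The doubly-null hypothesis gives $\alpha_i=\beta_i=1$ in $G_i$. The second and third relations therefore read $1=1$ and can be discarded. The presentation reduces to
\[
G_1*G_2/\nn\mu_1\mu_2^{-1}\NN .
\]
Equivalently, this is the pushout of $G_1\leftarrow\mathbb Z^3\rightarrow G_2$. Both maps factor through the quotient $\mathbb Z^3\to\mathbb Z^3/\langle\alpha,\beta\rangle\cong\mathbb Z\langle\mu\rangle$, and the pushout is unchanged if $\mathbb Z^3$ is replaced by this quotient, because the amalgamating maps factor through it.

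The only real point, and the step I expect to need care, is that this pushout is a genuine amalgamated free product. That requires the two maps $\mathbb Z\to G_i$, $1\mapsto\mu_i$, to be injective. I would prove this via abelianization. The class $[\mu_i]$ generates $H_1(C_i;\mathbb Z)\cong\mathbb Z$ by Alexander duality, so $\mu_i$ has infinite order in $G_i$, exactly as in the proof of Theorem~\ref{thm:KK-family}. Consequently $\langle\mu_i\rangle\cong\mathbb Z$, and $G_1*G_2/\nn\mu_1\mu_2^{-1}\NN$ is by definition $G_1*_{\langle\mu\rangle}G_2$ with the two infinite cyclic meridian subgroups identified. The normal form theorem for amalgams then shows that $G_1$ and $G_2$ embed, so the complement groups are retained rather than killed.

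It remains to check that the gluing exists and to specialize. Lemma~\ref{lem:T3gluing} realizes the basis assignment, whose matrix $\operatorname{diag}(1,1,-1)$ has determinant $-1$, by an orientation-reversing diffeomorphism. For the type-zero Kanenobu--Kazama exteriors, Theorem~\ref{thm:KK-family} provides the doubly-null markings $(\mu_k,\alpha_k,\beta_k)$. Substituting these gives $G_{k_1}*_{\mathbb Z}G_{k_2}$.
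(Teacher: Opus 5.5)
Your proposal is correct and follows essentially the same route as the paper. The paper applies Seifert--van Kampen, discards the two null relations using the doubly-null marking, and uses the fact that the meridian generates $H_1(C_i;\mathbb Z)\cong\mathbb Z$ to conclude that the amalgamating subgroups are infinite cyclic; your additional remarks on factoring through $\mathbb Z^3/\langle\alpha,\beta\rangle$, on realizing the gluing by Lemma~\ref{lem:T3gluing}, and on the factors embedding (the content of Corollary~\ref{cor:meridian-amalgam-nonabelian}) are consistent with it.
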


\begin{proof}
Van Kampen gives
\[
\frac{G_1*G_2}
{\left\langle\!\left\langle
\mu_1=\mu_2,\;
\alpha_1=\alpha_2,\;
\beta_1=\beta_2^{-1}
\right\rangle\!\right\rangle}.
\]
The last two relations are already trivial because
\(\alpha_i=\beta_i=1\). Thus the only effective relation identifies the
two meridians. A geometric meridian has infinite order because its
homology class generates
\[
H_1(C_i;\mathbb Z)\cong\mathbb Z.
\]
Hence the identified subgroups are both infinite cyclic, and the displayed
quotient is the claimed amalgamated product.
\end{proof}

\begin{corollary}\label{cor:meridian-amalgam-nonabelian}
If \(k_1,k_2\ge2\), then
\[
G_{k_1}*_{\mathbb Z}G_{k_2}
\]
is nonabelian. Both factors inject into the amalgam, so the finite
commutator subgroups \(G_{k_i}'\), of orders \(k_i^3\), survive as
subgroups.
\end{corollary}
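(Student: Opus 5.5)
The corollary follows from Proposition~\ref{prop:meridian-amalgam} and standard facts about amalgamated free products. By that proposition, $\pi_1(C_{F_{k_1}}\cup_\phi C_{F_{k_2}})\cong G_{k_1}*_{\mathbb Z}G_{k_2}$, where the amalgamation identifies the infinite cyclic subgroups $\langle\mu_1\rangle\le G_{k_1}$ and $\langle\mu_2\rangle\le G_{k_2}$. The proof has two steps: show that both factors inject, and show the amalgam is nonabelian.

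\textbf{Injectivity of the factors.} The first step is the normal form theorem for amalgamated free products. For any groups $A,B$ and injective homomorphisms $\mathbb Z\to A$, $\mathbb Z\to B$, the canonical maps $A\to A*_{\mathbb Z}B$ and $B\to A*_{\mathbb Z}B$ are injective. The only hypothesis to check is that the edge maps are injective. This holds because each geometric meridian $\mu_i$ has infinite order in $G_{k_i}$: its image generates $H_1(C_{F_{k_i}};\mathbb Z)\cong\mathbb Z$, as already noted in the proofs of Theorem~\ref{thm:KK-family} and Proposition~\ref{prop:meridian-amalgam}. Consequently $G_{k_i}$ embeds in the amalgam, and so does its finite commutator subgroup $G_{k_i}'$, which has order $k_i^3$ by Theorem~\ref{thm:KK-family}(iii).

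\textbf{Nonabelianness.} Since $k_1\ge2$, Theorem~\ref{thm:KK-family}(ii) says $G_{k_1}$ is nonabelian. Choose $g,h\in G_{k_1}$ with $[g,h]\ne1$. Because $G_{k_1}$ embeds, $[g,h]$ stays nontrivial in the amalgam, so the amalgam is nonabelian. Alternatively, one can apply the normal form directly: take $a\in G_{k_1}\setminus\langle\mu_1\rangle$ and $b\in G_{k_2}\setminus\langle\mu_2\rangle$. Such elements exist because $G_{k_i}'$ is a nontrivial finite group and therefore meets the torsion-free group $\langle\mu_i\rangle$ only trivially. The alternating word $aba^{-1}b^{-1}$ is then reduced, hence nontrivial.

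\textbf{Main obstacle.} The only delicate point is that injectivity of the factors depends on the amalgamated subgroups genuinely being infinite cyclic on both sides. If the meridian had finite order in one factor, the relation $\mu_1=\mu_2$ would no longer identify isomorphic copies of $\mathbb Z$, and the quotient presentation could collapse more than expected. This point is covered by the homology argument in the injectivity step, after which the corollary is formal.
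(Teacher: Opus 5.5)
Your proposal is correct and follows the paper's proof. The normal form theorem for amalgamated products gives injectivity of $G_{k_1}$ and $G_{k_2}$; you rightly verify its hypothesis via the infinite order of the meridians in $H_1\cong\mathbb Z$. Nonabelianness of $G_{k_1}$ then passes to the amalgam, and the commutator subgroups $G_{k_i}'$ embed along with their factors. Your alternative reduced-word argument with $aba^{-1}b^{-1}$ is also valid but not needed.
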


\begin{proof}
The normal-form theorem for free products with amalgamation
\cite[Chapter~IV]{LyndonSchupp} gives injective maps of \(G_{k_1}\) and
\(G_{k_2}\) into \(G_{k_1}*_{\mathbb Z}G_{k_2}\). Since each \(G_{k_i}\)
is nonabelian, the amalgam is nonabelian, and its embedded factors contain
the stated commutator subgroups.
\end{proof}

\begin{remark}
The two gluings use the same exterior pieces but have opposite effects on
the fundamental group. The swap identifies each meridian with a null
slope and absorbs both groups. The meridian-preserving map leaves the null
relations unchanged and joins the two groups along their common meridian.
This is another instance in which the marked peripheral system, rather
than the abstract exterior group alone, controls the closed-manifold
fundamental group.
\end{remark}

\section{Twisted peripheral gluings and controlled fundamental groups}\label{sec:twisted-pi1}

The previous sections use boundary identifications chosen so that the inserted exterior groups are completely absorbed.  A different marking retains a prescribed quotient of one exterior group.  Since the point of this section is the fundamental group, we keep the group calculation explicit and do not identify different quotients unless the required action has been determined.

Retain the rim tori $R_d,R_y\subset Y_K$.  First perform a replacement along $R_d$ using a doubly-null exterior in the same way as in the simply connected construction, so that the class $d$ is killed.  From the presentation of $\pi_1(Y_K)$, imposing $d=1$ gives
\[
 b=1,\qquad x=1,\qquad a=1,
\]
while $y$ survives.  Hence
\[
 \pi_1(Y_K)/\nn d\NN\cong \mathbb Z\langle y\rangle .
\]
Moreover the meridian of $\Sigma_2$ is represented by $[x,b]$, and therefore becomes trivial after imposing $d=1$.

Let $(C;\mu,\alpha,\beta)$ be a doubly-null marked torus exterior.  On $\partial\nu R_y$ start with the basis $(y,\mu_y,\rho_y)$ and replace it by
\[
 \bigl(y,\,y^n\mu_y,\,\rho_y\bigr),\qquad n\in\mathbb Z.
\]
In additive notation on $H_1(\mathbb T^3)$ this change of basis is represented by
\[
\begin{pmatrix}
1&n&0\\
0&1&0\\
0&0&1
\end{pmatrix},
\]
so it is unimodular.  Glue by
\[
 y\longmapsto \mu,\qquad
 y^n\mu_y\longmapsto \alpha,\qquad
 \rho_y\longmapsto \beta.
\]
Since $\alpha=\beta=1$, van Kampen imposes
\[
 \rho_y=1,\qquad y^n\mu_y=1,\qquad y=\mu.
\]
After the first replacement has killed $d$, the class $\mu_y=[x,b]$ is already trivial.  Therefore $y^n=1$, and hence $\mu^n=1$ in the inserted exterior group.

\begin{theorem}\label{thm:meridional-quotient}
Let $C$ be a doubly-null marked torus exterior, let $G=\pi_1(C)$, and let $M_n(C)$ be the closed manifold obtained by the two-step construction above.  Then
\[
 \mathord{\ \pi_1(M_n(C))\cong G/\nn\mu^n\NN .\ }
\]
For $n=0$ no additional meridional relation is imposed.
\end{theorem}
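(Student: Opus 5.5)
The plan is to compute the fundamental group with Seifert--van Kampen, keeping everything as a single pushout. The goal is to show that the central piece collapses onto the cyclic group $\langle y\rangle$, which is then identified with $\langle\mu\rangle$, while the first exterior disappears completely. Write $W=Y_K\setminus\Int(\nu R_d\cup\nu R_y)$, $A=\pi_1(W)$, $C_d$ for the doubly-null exterior inserted along $R_d$, and $G_d=\pi_1(C_d)$. The group $\pi_1(M_n(C))$ is then the quotient of $A*G_d*G$ by six boundary identifications. Three come from $R_d$: $d=\alpha_d$, $\rho_d=\beta_d$ and $\mu_d=\mu_{C_d}$. The other three come from $R_y$ as displayed above: $y=\mu$, $y^n\mu_y=\alpha$ and $\rho_y=\beta$.

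First I would use the null slopes. They give $d=\rho_d=\rho_y=1$ and $y^n\mu_y=1$. Killing $\rho_d$ and $\rho_y$ makes the image of $A$ factor through $\pi_1(Y_K)$, exactly as in the proof of Lemma~\ref{lem:replacement}. Imposing $d=1$ then collapses $a$, $b$ and $x$, so the image of $A$ is a quotient of $\pi_1(Y_K)/\nn d\NN\cong\mathbb Z\langle y\rangle$. In particular $[x,b]=1$, and so $\mu_y=\mu_d=1$. The relation $\mu_d=\mu_{C_d}$ kills the meridian of $C_d$, and Lemma~\ref{lem:normal-meridian} kills all of $G_d$. Since $\mu_y=1$, the remaining relations read $y^n=1$ and $y=\mu$.

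At this point the group is the amalgam of $\mathbb Z\langle y\rangle/\langle y^n\rangle$ and $G$ along $y=\mu$. Concretely it is $\bigl(\langle y\mid y^n\rangle * G\bigr)/\nn y\mu^{-1}\NN$. Eliminating $y$ leaves $G/\nn\mu^n\NN$, which is the claimed presentation. For $n=0$ the relation $y^0=1$ is vacuous, so no meridional relation is added and the group is $G$ itself.

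The main obstacle is the first reduction, namely that the image of $A$ is exactly a quotient of $\langle y\rangle$ and imposes nothing further on $y$. The bound from above is the collapse computation already made. Equality is needed so that no extra relation on $\mu$ creeps in, and for that I would build an explicit retraction. The plan is to map the pushout onto $G/\nn\mu^n\NN$ by sending $A\to\pi_1(Y_K)/\nn d\NN\cong\mathbb Z\to G/\nn\mu^n\NN$ via $y\mapsto\mu$, sending $G_d$ to $1$, and sending $G$ to its quotient. I would then check that all six relations hold under this map; this uses that $\mu_y$ maps to $[x,b]=1$ and that $\alpha,\beta$ are null. Combined with the surjection in the other direction, this gives the isomorphism.
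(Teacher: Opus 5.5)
Your proposal is correct and follows the same route as the paper: van Kampen with the null slopes killing $d,\rho_d,\rho_y$ and $y^n\mu_y$, then the collapse of the central group onto $\pi_1(Y_K)/\nn d\NN\cong\mathbb Z\langle y\rangle$ with $\mu_y=\mu_d=1$, and finally the identification $y=\mu$, which forces $\mu^n=1$. The one addition is your explicit retraction onto $G/\nn\mu^n\NN$, whose composite with the natural surjection $G/\nn\mu^n\NN\to\pi_1(M_n(C))$ is the identity; this justifies the paper's otherwise unsupported assertion that no other relation is introduced in $G$.
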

\begin{proof}
After the first replacement, the image of the central group is infinite cyclic generated by $y$, and $\mu_y=1$.  The second gluing identifies $y$ with the exterior meridian $\mu$ and imposes $y^n=1$.  No other relation is introduced in $G$.  Thus the final group is precisely $G/\nn\mu^n\NN$.
\end{proof}

For the Kanenobu-Kazama examples the meridional quotient has the following normal form.

\subsection{Semidirect-product form of a torus-complement group}

Let $G$ be the group of any torus exterior in $\mathbb S^4$.  Alexander duality gives
\[
G_{\mathrm{ab}}\cong H_1(C;\mathbb Z)\cong\mathbb Z,
\]
and the geometric meridian maps to a generator.  Write $N=G'$.  The map
\[
\mathbb Z\longrightarrow G,\qquad 1\longmapsto\mu,
\]
is a section of the abelianization.  Consequently
\[
\mathord{\ G\cong N\rtimes_\varphi\mathbb Z,\ }
\]
where $\varphi(g)=\mu g\mu^{-1}$ is meridional conjugation.  This observation does not assert that $N$ is finite; finiteness is an additional feature of the Kanenobu-Kazama subfamilies considered below.

The next lemma records exactly what imposing $\mu^n=1$ does.

\begin{lemma}\label{lem:semidirect-quotient}
Let
\[
G=N\rtimes_\varphi\langle\mu\rangle,
\qquad \langle\mu\rangle\cong\mathbb Z,
\]
and let $n\ge1$.  Put
\[
N_n=N/\nn \varphi^n(g)g^{-1}\mid g\in N\NN.
\]
Then $\varphi$ descends to an automorphism $\bar\varphi$ of $N_n$ satisfying $\bar\varphi^n=1$, and
\[
\mathord{
G/\nn\mu^n\NN\cong N_n\rtimes_{\bar\varphi}\mathbb Z_n.
}
\]
In particular, if $\varphi^n=1$ already on $N$, then
\[
G/\nn\mu^n\NN\cong N\rtimes_\varphi\mathbb Z_n.
\]
\end{lemma}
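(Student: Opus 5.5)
The plan is to compute $\nn\mu^n\NN$ directly inside the semidirect product $G=N\rtimes_\varphi\langle\mu\rangle$, using $\mu g\mu^{-1}=\varphi(g)$ for $g\in N$. Put $K=\nn\varphi^n(g)g^{-1}\mid g\in N\NN\subseteq N$, the normal closure taken in $N$.

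First I check that $K$ is $\varphi$-invariant, so that $\varphi$ descends to $N_n=N/K$. Since $\varphi$ is an automorphism commuting with $\varphi^n$,
\[
\varphi\bigl(\varphi^n(g)g^{-1}\bigr)=\varphi^n(\varphi(g))\varphi(g)^{-1}.
\]
So $\varphi$ permutes the generators of $K$, and being an automorphism it preserves normal closures in $N$; the same argument applies to $\varphi^{-1}$. Hence $\varphi(K)=K$ and $\bar\varphi\in\operatorname{Aut}(N_n)$ is defined. By construction $\bar\varphi^n(\bar g)=\bar g$ for every $\bar g$, so $\bar\varphi^n=1$. It follows that $N_n\rtimes_{\bar\varphi}\mathbb Z_n$ is well defined: the generator of $\mathbb Z_n$ acts by $\bar\varphi$, and this is a homomorphism $\mathbb Z_n\to\operatorname{Aut}(N_n)$.

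Second, I identify $\nn\mu^n\NN$ with $K\rtimes\langle\mu^n\rangle$. For the inclusion of $K$, note that for $g\in N$,
\[
[\mu^n,g]=\mu^ng\mu^{-n}g^{-1}=\varphi^n(g)g^{-1},
\]
which lies in $\nn\mu^n\NN$. So $K\subseteq\nn\mu^n\NN$, and therefore the subset $K\langle\mu^n\rangle\subseteq\nn\mu^n\NN$. For the reverse inclusion, I show that $K\langle\mu^n\rangle$ is a normal subgroup containing $\mu^n$.
\begin{enumerate}[label=(\alph*)]
\item It is a subgroup, because $\mu^n$ normalizes $K$ (by $\varphi$-invariance of $K$).
\item It is stable under conjugation by $\mu$: this uses $\varphi(K)=K$ together with the fact that $\mu$ commutes with $\mu^n$.
\item It is stable under conjugation by $h\in N$: $K$ is normal in $N$, and $h\mu^nh^{-1}=\mu^n\cdot\bigl(\varphi^{-n}(h)h^{-1}\bigr)$. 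The factor $\varphi^{-n}(h)h^{-1}$ lies in $K$, since it equals $\varphi^n(g)g^{-1}$ with $g=\varphi^{-n}(h)$.
\end{enumerate}
Hence $\nn\mu^n\NN=K\langle\mu^n\rangle$.

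Third, I take the quotient. The map
\[
G=N\rtimes\langle\mu\rangle\longrightarrow N_n\rtimes_{\bar\varphi}\mathbb Z_n,\qquad (g,\mu^j)\longmapsto(\bar g,\,j\bmod n),
\]
is a homomorphism because the actions are compatible, and it is surjective. Its kernel consists of the pairs $(g,\mu^j)$ with $g\in K$ and $n\mid j$, which is exactly $K\langle\mu^n\rangle$. The isomorphism $G/\nn\mu^n\NN\cong N_n\rtimes_{\bar\varphi}\mathbb Z_n$ follows. For the final clause: if $\varphi^n=1$ on $N$, then every generator $\varphi^n(g)g^{-1}$ of $K$ is trivial, so $K=1$, $N_n=N$ and $\bar\varphi=\varphi$.

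The main obstacle is the normality check in the second step, specifically conjugating $\mu^n$ by elements of $N$. This is the one point where we must verify that the relations generating $K$ are exactly what is needed, no more and no less, and I would handle it by the explicit identity in item (c).
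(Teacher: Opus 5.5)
Your proof is correct and uses the same key identity as the paper's, $\mu^n g\mu^{-n}g^{-1}=\varphi^n(g)g^{-1}$: it shows the relations defining $N_n$ are forced, and the converse is realized by $N_n\rtimes_{\bar\varphi}\mathbb Z_n$. The difference is only in execution: you use an explicit homomorphism and compute its kernel where the paper rewrites presentations, and you also verify $\varphi(K)=K$, which the paper leaves implicit.

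Two small points. In (c), the element $\varphi^{-n}(h)h^{-1}$ is the \emph{inverse} of $\varphi^n(g)g^{-1}$ for $g=\varphi^{-n}(h)$, not equal to it; it still lies in $K$. Also, the normality checks (a)--(c) are unnecessary, since $K\langle\mu^n\rangle$ is the kernel of your map in the third step and is therefore automatically normal.
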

\begin{proof}
In the quotient by $\mu^n$, conjugation by $\mu^n$ is trivial.  Hence every $g\in N$ satisfies
\[
g=\mu^n g\mu^{-n}=\varphi^n(g),
\]
so the indicated relations are necessary.  Conversely, after imposing them on $N$, the action of $\mu$ has order dividing $n$, and the standard semidirect-product presentation becomes
\[
\left\langle N_n,\mu\ \middle|\ \mu g\mu^{-1}=\bar\varphi(g),\ \mu^n=1\right\rangle,
\]
which is $N_n\rtimes_{\bar\varphi}\mathbb Z_n$.
\end{proof}

Thus the finite normal subgroup is controlled by $N=G'$, while the quotient depends on the meridional action on $N$.

\subsection{The Kanenobu-Kazama presentations after twisted gluing}

For the $1$-handle $h_w$ on the $6$-twist-spun trefoil, Section~\ref{sec:KK} gives
\[
G_w=
\left\langle x,y\ \middle|\
xyx=yxy,\ [x^6,y]=1,\
[x,x^jwx^{-j}]=1\ (0\le j\le5)
\right\rangle,
\]
with meridian $x$.  Therefore the twisted construction has the completely explicit presentation
\[
\mathord{
\pi_1(M_n(C_w))=
\left\langle x,y\ \middle|\
xyx=yxy,\ [x^6,y]=1,\
[x,x^jwx^{-j}]=1\ (0\le j\le5),\ x^n=1
\right\rangle .
}
\]
This presentation is valid without determining the structure of the resulting finite quotient.

The relation $[x^6,y]=1$ shows that $x^6$ commutes with $y$, and it plainly commutes with $x$.  Hence $x^6$ is central in the original $6$-twist-spun group and remains central in every quotient $G_w$.  It follows that meridional conjugation satisfies
\[
\varphi^6=1\quad\text{on }G_w'.
\]
Consequently, whenever $n=6r$, Lemma~\ref{lem:semidirect-quotient} has $N_n=N$ and gives
\[
\mathord{
\pi_1(M_{6r}(C_w))\cong G_w'\rtimes_{\varphi}\mathbb Z_{6r}.
}
\]
This is the reason for restricting the clean finite-group formulas below to exponents divisible by six.  For a general $n$, the correct group is the quotient described in Lemma~\ref{lem:semidirect-quotient}; additional relations may be imposed on $G_w'$.

\subsection{Cyclic examples}

For the $k=1$ member of the family $G(k,0,m)$, Kanenobu-Kazama's Claim~6 \cite{KK} gives
\[
|G'(1,0,m)|=1.
\]
Therefore the full complement group is infinite cyclic:
\[
G(1,0,m)\cong\mathbb Z\langle\mu\rangle.
\]
The meridional quotient formula gives
\[
\mathord{
\pi_1(M_n(C_{F_1}))\cong\mathbb Z_n\qquad(n\ge1),
}
\]
and
\[
\pi_1(M_0(C_{F_1}))\cong\mathbb Z.
\]
Thus the construction realizes the trivial group, every finite cyclic group, and the infinite cyclic group by varying only the boundary marking.

\begin{remark}
For this one-meridian quotient construction, an abelian resulting group must be cyclic.  Indeed,
\[
\left(G/\nn\mu^n\NN\right)_{\mathrm{ab}}
\cong \mathbb Z_n.
\]
Thus a noncyclic abelian fundamental group cannot occur from this particular meridional quotient.  Obtaining, for example, $\mathbb Z_p\oplus\mathbb Z_q$ requires a construction in which two independent abelian generators survive the central block.
\end{remark}

\subsection{The quaternion-by-cyclic family}

For $k=2$ and the doubly-null choice $m=1$, Kanenobu-Kazama state explicitly \cite[Remark after Claim~7]{KK} that
\[
G_2'=G'(2,0,1)\cong Q_8.
\]
Hence
\[
G_2\cong Q_8\rtimes_{\varphi_2}\mathbb Z.
\]
The relation $[x^6,y]=1$ implies $\varphi_2^6=1$.  Therefore for every $r\ge1$,
\[
\mathord{
\pi_1(M_{6r}(C_{F_2}))
\cong Q_8\rtimes_{\varphi_2}\mathbb Z_{6r}.
}
\]
Its order is
\[
\mathord{\left|\pi_1(M_{6r}(C_{F_2}))\right|=8\cdot6r=48r.}
\]
It is nonabelian because it contains the normal subgroup $Q_8$.

Kanenobu-Kazama identify the commutator subgroup with $Q_8$.  We do not determine the automorphism $\varphi_2$; for the order and nonabelianity statements it is enough that meridional conjugation has order dividing six.

For exponents not divisible by six one still has an exact description:
\[
\pi_1(M_n(C_{F_2}))
\cong (Q_8)_n\rtimes\mathbb Z_n,
\]
where
\[
(Q_8)_n=
Q_8/\nn\varphi_2^n(q)q^{-1}\mid q\in Q_8\NN.
\]
Depending on $n$ and on the action $\varphi_2$, this quotient can be smaller than $Q_8$.  We make no stronger identification without computing $\varphi_2$ explicitly.

\subsection{Odd $k$: Heisenberg commutator groups}

Suppose $k>1$ is odd and choose the doubly-null parameter $m=k$.  Kanenobu-Kazama's Claim~6 \cite{KK} gives $|G_k'|=k^3$ and exhibits a surjection of $G_k'$ onto the upper-unitriangular group
\[
H_k=\operatorname{UT}_3(\mathbb Z_k)
=
\left\{
\begin{pmatrix}
1&r&s\\
0&1&t\\
0&0&1
\end{pmatrix}
\; ;\; r,s,t\in\mathbb Z_k
\right\}.
\]
Since $|H_k|=k^3=|G_k'|$, this map is an isomorphism.  Hence
\[
\mathord{G_k'\cong H_k\qquad(k>1\text{ odd}).}
\]
Equivalently, $H_k$ has a presentation
\[
H_k=\left\langle A,B,C\ \middle|\
A^k=B^k=C^k=1,\ C=[A,B],\ [A,C]=[B,C]=1
\right\rangle .
\]
For $k>1$ this group is nonabelian; its center and commutator subgroup are both generated by $C$.

The full torus-complement group is therefore
\[
G_k\cong H_k\rtimes_{\varphi_k}\mathbb Z,
\qquad \varphi_k^6=1.
\]
For $n=6r$ we obtain the explicit finite nonabelian family
\[
\mathord{
\pi_1(M_{6r}(C_{F_k}))
\cong H_k\rtimes_{\varphi_k}\mathbb Z_{6r},
\qquad k>1\text{ odd},
}
\]
with
\[
\mathord{
\left|\pi_1(M_{6r}(C_{F_k}))\right|=6r\,k^3.
}
\]
For example,
\[
k=3,\quad r=1
\]
gives
\[
\pi_1(M_6(C_{F_3}))
\cong \operatorname{UT}_3(\mathbb Z_3)\rtimes_{\varphi_3}\mathbb Z_6,
\]
a nonabelian group of order
\[
27\cdot6=162.
\]
We do not need the precise automorphism $\varphi_k$ for the group order or nonabelianity.

\subsection{Even $k$: the finite commutator subgroup from Claim~7}

Let $k=2s\ge2$ and choose the doubly-null parameter $m=s$.  Kanenobu-Kazama's Claim~7 \cite{KK} gives a presentation of $G_k'$ as an extension of a finite group $N_k$ by a cyclic group of order $2s$, and proves
\[
|G_k'|=k^3.
\]
For $k=2$ this specializes to $Q_8$.  For general even $k$ we use their extension description rather than replacing it by an unproved familiar group.

Thus
\[
G_k\cong G_k'\rtimes_{\varphi_k}\mathbb Z,
\qquad \varphi_k^6=1,
\]
and for every $r\ge1$,
\[
\mathord{
\pi_1(M_{6r}(C_{F_k}))
\cong G_k'\rtimes_{\varphi_k}\mathbb Z_{6r},
\qquad k\text{ even},
}
\]
with
\[
\mathord{
\left|\pi_1(M_{6r}(C_{F_k}))\right|=6r\,k^3.
}
\]
The group is nonabelian for $k\ge2$.  Indeed, the quotient map
$G_k\to G_k/\nn\mu^{6r}\NN$ sends the commutator subgroup of $G_k$ onto the commutator subgroup of the quotient.  Since $\varphi_k^{6r}=1$, the normal closure of $\mu^{6r}$ meets the embedded factor $G_k'$ trivially in the semidirect-product normal form.  Hence the derived subgroup of the finite quotient is isomorphic to $G_k'$, which has order $k^3>1$.  We do not assign a standard name to $G_k'$ unless Kanenobu-Kazama identify it explicitly.

\subsection{A uniform statement}

The preceding calculations can be summarized as follows.

\begin{theorem}\label{thm:finite-KK}
Let $C_{F_k}$ be one of the doubly-null exteriors from Theorem~\ref{thm:KK-family}.  Write
\[
G_k=\pi_1(C_{F_k})=G_k'\rtimes_{\varphi_k}\mathbb Z,
\]
where the infinite cyclic factor is generated by the meridian.  Then
\[
\varphi_k^6=1.
\]
For every $r\ge1$,
\[
\pi_1(M_{6r}(C_{F_k}))
\cong G_k'\rtimes_{\varphi_k}\mathbb Z_{6r}
\]
and
\[
\left|\pi_1(M_{6r}(C_{F_k}))\right|=6r\,k^3.
\]
For every $k\ge2$ the resulting group is finite and nonabelian.  The separate $k=1$ cyclic case discussed above gives $\mathbb Z_{6r}$.
\end{theorem}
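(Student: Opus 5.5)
The argument assembles four earlier ingredients in order: the semidirect-product decomposition of a torus-complement group, the centrality of $x^6$, Lemma~\ref{lem:semidirect-quotient}, and Theorem~\ref{thm:meridional-quotient}.

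\emph{Step 1 (semidirect form).} Since $G_k^{\mathrm{ab}}\cong\mathbb Z$ is generated by the meridian, the map $1\mapsto\mu$ splits the abelianization. This gives $G_k=G_k'\rtimes_{\varphi_k}\mathbb Z$ with $\varphi_k(g)=\mu g\mu^{-1}$.

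\emph{Step 2 ($\varphi_k^6=1$).} By Proposition~\ref{prop:KKpresentation}, $G_k$ is a quotient of $\pi=\langle x,y\mid xyx=yxy,\ [x^6,y]=1\rangle$ with meridian $\mu=x$. The relation $[x^6,y]=1$, together with $x^6$ commuting with $x$, makes $x^6$ central in $\pi$ and hence in every quotient. Conjugation by $\mu^6$ is therefore the identity on $G_k$, so $\varphi_k^6=1$ on $G_k'$.

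\emph{Step 3 (the quotient).} Apply Theorem~\ref{thm:meridional-quotient} with $n=6r$ to get $\pi_1(M_{6r}(C_{F_k}))\cong G_k/\nn\mu^{6r}\NN$. Since $\varphi_k^{6r}=1$, the relations $\varphi_k^{6r}(g)g^{-1}$ defining $N_{6r}$ in Lemma~\ref{lem:semidirect-quotient} are all trivial, so $N_{6r}=G_k'$. The lemma then gives $G_k'\rtimes_{\varphi_k}\mathbb Z_{6r}$, of order $6r\,|G_k'|=6r\,k^3$ by Theorem~\ref{thm:KK-family}(iii). The group is finite. For $k=1$, $G_1'=1$ and the group is $\mathbb Z_{6r}$.

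\emph{Step 4 (nonabelianity, the main obstacle).} We must check that the finite quotient stays nonabelian for $k\ge2$. First we confirm that $G_k'$ embeds in the quotient. In the semidirect product $G_k'\rtimes\mathbb Z$, the element $\mu^{6r}$ is central because $\varphi_k^{6r}=1$. Its normal closure is therefore the cyclic subgroup $\langle\mu^{6r}\rangle$, which lies in the $\mathbb Z$ factor and meets $G_k'$ trivially. Hence $G_k'$ injects into the quotient.

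Since $G_k'$ is the derived subgroup of $G_k$, it is generated by commutators. Its image in the quotient lies in the derived subgroup of the quotient, and that image is nontrivial because $|G_k'|=k^3>1$. A group with nontrivial derived subgroup is nonabelian, which finishes the argument.

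The point needing care is that $\langle\!\langle\mu^{6r}\rangle\!\rangle$ is exactly $\langle\mu^{6r}\rangle$. This follows from centrality of $\mu^{6r}$, which in turn rests on Step 2.
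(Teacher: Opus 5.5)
Your proposal is correct and follows the paper's proof. The meridian splits the abelianization, centrality of $x^6$ gives $\varphi_k^6=1$, and Theorem~\ref{thm:meridional-quotient} together with Lemma~\ref{lem:semidirect-quotient} at $n=6r$ gives the semidirect product of order $6r\,k^3$; your nonabelianity argument ($\mu^{6r}$ is central, so $\nn\mu^{6r}\NN=\langle\mu^{6r}\rangle$ meets $G_k'$ trivially) is the one the paper gives in its even-$k$ discussion, stated a bit more explicitly.
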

\begin{proof}
The splitting of $G_k\to\mathbb Z$ is supplied by the meridian.  Since $x^6$ is central in the $6$-twist-spun trefoil group, its image is central in $G_k$, and therefore $\varphi_k^6=1$.  Apply Theorem~\ref{thm:meridional-quotient} and Lemma~\ref{lem:semidirect-quotient} with $n=6r$.  The order formula follows from $|G_k'|=k^3$.
\end{proof}

\begin{remark}
For a general exponent $n$, Theorem~\ref{thm:meridional-quotient} remains valid, but the finite normal subgroup is
\[
(G_k')_n=G_k'/\nn\varphi_k^n(g)g^{-1}\mid g\in G_k'\NN,
\]
which can be a proper quotient of $G_k'$.  Thus the formula $G_k'\rtimes\mathbb Z_n$ should not be asserted for arbitrary $n$ without computing the meridional action.
\end{remark}

\section*{Acknowledgments}

The first author thanks Çağrı Karakurt and Cliff Taubes for helpful discussions.  This work was motivated by the first author's earlier preprint \cite{AkhPre}, where the corresponding replacement construction was studied for torus exteriors with infinite cyclic fundamental group. LLM tool were used for assistance for checking grammar and language editing. All mathematical ideas, arguments, and results in this paper are due to the authors, who takes full responsibility for the content.

\section*{Author information}

\noindent
\textbf{Anar Akhmedov}\\
School of Mathematics, University of Minnesota, Minneapolis, Minnesota, USA.\\
Visiting Scholar, Department of Mathematics, Harvard University, Cambridge, Massachusetts, USA.\\
Email: \texttt{akhmedov@math.umn.edu}

\medskip

\noindent
\textbf{Azer Akhmedov}\\
Department of Mathematics, North Dakota State University, Fargo, North Dakota, USA.\\
Email: \texttt{azer.akhmedov@ndsu.edu}

\end{document}